\documentclass{amsart}
\usepackage{amsmath, amssymb,amsthm}
\usepackage{mathrsfs}
\usepackage{enumitem}

\usepackage[pdftex]{graphicx}
 
\newcommand{\D}{\mathbb{D}}
\newcommand{\C}{\mathbb{C}}
\newcommand{\R}{\mathbb{R}}

\newcommand{\N}{\mathbb{N}}

\DeclareMathOperator{\sgn}{sgn}

\newcommand{\mc}[1]{\mathcal{#1}}

\theoremstyle{plain}
\newtheorem{theorem}{Theorem}
\newtheorem{corollary}{Corollary}
\newtheorem{proposition}{Proposition}
\newtheorem{lemma}{Lemma}

\theoremstyle{definition}

\theoremstyle{remark}

\title[H\"ormander-Bernhardsson constant]{A Painlev\'e equation for the  H\"ormander-Bernhardsson constant}
\author{Friedrich Littmann}
\address{Department of Mathematics, North Dakota State University, Fargo, ND 58103}
\email{Friedrich.Littmann@ndsu.edu}

\keywords{H\"ormander-Bernhardsson constant, Ladder operator, Pad\'e approximation, Deift-Zhou steepest descent}

\begin{document}

\begin{abstract} The H\"ormander-Bernhardsson constant $\mathscr{C}$ is the sharp constant in $|f(0)|\le \mathscr{C} \|f\|_1$ for entire functions of exponential type $\le \pi$. We prove that $\mathscr{C} = 2\pi \theta_*^{-2}$ where $\theta_*$ is the least positive singularity of the regular solution $v$ with $v(0)=0$ of the cosh-Gordon equation $v_{\theta\theta} + v_\theta/\theta = \cosh(v)$. 

It is known that $\mathscr{C}$ is a scaling limit in $n$ from the analogous problem for polynomials of degree $\le n$. We reformulate the polynomial problem  as a Pad\'e approximation problem at infinity. The associated matrix Riemann-Hilbert problem is analyzed by a Deift-Zhou steepest descent whose local parametrix is built from a Painlev\'e transcendent. 
\end{abstract}

\maketitle

\section{Introduction and Result}

Let $\tau>0$, and $0\le p\le \infty$.  We denote by $PW^p_\tau $ the Paley-Wiener space of entire functions of exponential type $\tau>0$ in $L^p(\R)$. Consider the problem 
\[
\mathfrak{C}_\tau = \inf_{\substack{F\in PW_\tau^1\\ F(0)=1}}\|F\|_1
\]

Hörmander and Bernhardsson introduced this problem in \cite{BH93} in connection with an extension of Bohr's inequality. They proved the extremizer has only real simple zeros, and computed $0.5409288219 \le \mathscr{C} \le  0.5409288220$ where $\mathscr{C} =  1/\mathfrak{C}_\pi$.

The recent works   \cite{ BORS26-2, BORS26} give several new properties of the extremal function. In  \cite{BORS26-2}  a third order linear differential equation for the extremal function is derived, several open questions from \cite{BH93} are settled, and algorithms are developed that allow computing  a decimal approximation to  $\mathscr{C}$ precise to 100 digits. Recent preprints  \cite{Gor26} and independently \cite{GRR26} appeared that focus on higher dimensional versions of this problem.

Let $n\in\N$, and define $\mc{P}_n$  to be the vector space of polynomials of degree at most $n$. The corresponding problem for $\mc{P}_n$ is
\begin{align}\label{Cn-extremal}
C_n = \inf_{\substack{P\in \mc{P}_{n} \\ P(0)=1}} \|P\|_1.
\end{align}

The polynomial problem was considered initially in \cite{AmZie76} where properties of the extremizer are investigated. In \cite{LevLub15}, the Paley-Wiener and the polynomial  setting are considered. In particular, scaling limits are developed that allow transferring polynomial limits into the Paley-Wiener setting. For our purpose, we need  
\begin{align}\label{LL-scaling}
\lim_{n\to \infty} n C_{n} = \mathfrak{C}_1
\end{align}
which follows from \cite[Theorem 1.3(a)]{LevLub15}.  For additional generalizations and applications we refer to \cite{BCOS22, CMS19, DGT21, Gor05}. 

To describe the main result of the present paper, let $v = v(\theta)$ be the  solution of the cosh-Gordon Painlev\'e III equation
\[
v_{\theta\theta} +\frac1\theta v_\theta = \cosh(v)
\]
that is regular near the origin (hence $v'(0)=0$) and satisfies $v(0) =0$. Denote by $\theta_*$ the smallest positive value of $\theta$ for which $v$ has a singularity. 

\begin{theorem} The limit relation
\[
\lim_{n\to \infty} n C_{n}  = \frac{\theta_*^2}{2} 
\]
holds.
\end{theorem}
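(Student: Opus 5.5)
Write $P_n$ for the extremizer in \eqref{Cn-extremal}. The plan is to compute the asymptotics of $C_n$ directly from a Riemann--Hilbert characterization of $P_n$. Combined with \eqref{LL-scaling}, this also gives $\mathscr{C}=1/\mathfrak{C}_\pi = 1/(\pi\mathfrak{C}_1)=2\pi\theta_*^{-2}$.

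\textbf{Step 1: variational conditions.} Since $P_n$ minimizes the $L^1$ norm on the affine space $\{P\in\mc{P}_n: P(0)=1\}$, the first-order conditions give
\[
\int_{\R}\sgn(P_n(x))\,x^k\,dx=0,\qquad k=1,\dots,n.
\]
The weight $\sgn P_n$ is not integrable against polynomials on $\R$. So I would first localize by the change of variables $x\mapsto$ a compact interval, or work with the principal-value/Cauchy transform. By the known results \cite{AmZie76}, $P_n$ has only real simple zeros $x_1<\dots<x_n$, so $\sgn P_n$ is piecewise constant with jumps at the $x_j$. The orthogonality conditions then say that the Cauchy transform
\[
\int \frac{\sgn P_n(t)}{t-z}\,dt,
\]
suitably regularized, is $O(z^{-n-1})$ at infinity up to a polynomial part. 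This is a Pad\'e approximation problem at infinity: the function is a sum of logarithms $\sum_j \pm 2\log(z-x_j)$, whose exponential is the rational function $\prod_j (z-x_j)^{\pm 2}$ with alternating exponents. The unknowns are the nodes $x_j$, and the Pad\'e conditions pin them down.

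\textbf{Step 2: Riemann--Hilbert formulation.} I would encode this in a $2\times2$ RH problem of Fokas--Its--Kitaev type with jumps on $\R$ (or on a scaled interval). The jump matrix is built from the piecewise constant weight, and the degree-$n$ normalization is imposed at infinity. Rescaling $x=t/n$ puts the zeros at spacing $O(1)$ in $t$; by \eqref{LL-scaling}, $nC_n$ converges. The essential structure then sits near the origin, at scale $1/n$, where the constraint $P(0)=1$ lives.

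\textbf{Step 3: steepest descent.} I would run the Deift--Zhou steps: a $g$-function normalization at infinity, opening of lenses, and a global parametrix away from $0$. At the origin I need a local parametrix. The natural model is a $\theta$-dependent RH problem whose isomonodromic deformation in $\theta$ (the rescaled type/degree parameter) is governed by a Lax pair whose compatibility is the radial cosh-Gordon (Painlev\'e III, $D_8$-type) equation $v''+v'/\theta=\cosh v$. Regularity at $\theta=0$ together with $v(0)=0$ should come from the symmetry $x\mapsto -x$ and the normalization at the origin. I expect this identification, namely showing that the model problem has exactly this monodromy data and is solvable precisely for $\theta<\theta_*$, to be the main obstacle. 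Existence for small $\theta$ would come from a vanishing lemma or from a small-norm argument. Solvability must break down exactly at the first pole of $v$, because poles of a Painlev\'e transcendent correspond to non-solvability of the RH problem (Malgrange divisor).

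\textbf{Step 4: identifying the constant.} Finally, I would express $nC_n$, which is the $L^1$ norm or equivalently a residue coefficient in the expansion of the RH solution, through the model problem. The extremality condition forces the parameter $\theta$ to sit exactly at the edge of solvability. Concretely, the minimal norm corresponds to the first $\theta$ at which the model solution degenerates, giving $\lim nC_n=\theta_*^2/2$, where the square reflects that the scaling variable is $\theta^2/2$. Error terms from the small-norm problem are $O(1/n)$, which is enough to pass to the limit.
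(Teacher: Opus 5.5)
Your outline ends in the right place, a Painlev\'e~III parametrix at the origin whose first singularity produces $\theta_*$. But the two steps that carry the proof are mis-specified or missing.

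\textbf{The linear problem is set up wrongly.} The $L^1$ norm defining $C_n$ is over $[-1,1]$, since nonzero polynomials are not integrable on $\R$. So no localization is needed, but the endpoints matter. Exponentiating the Cauchy transform of $\sgn P_n$ gives $\bigl(\tfrac{z-1}{z+1}\bigr)^{\pm1}\prod_j(z-x_j)^{\pm2}$. Hence the function being approximated is $\sqrt{(z\mp1)/(z\pm1)}\,e^{-c/(2z)}$ with $c=\int_{-1}^1\sgn P_n$, not $e^{-c/z}$. That square-root factor is what produces the global parametrix $N$ and the endpoint parametrices.

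More seriously, an RH problem ``built from the piecewise constant weight'' $\sgn P_n$ has jump data that depend on the unknown nodes $x_j$. So there is nothing to steepest-descend, and the node equations stay nonlinear. The paper linearizes by making $c$ a free parameter. For each $c$, the candidate sign changes are the zeros of the $[k/k]$ Pad\'e numerator and denominator of the explicit function $\sqrt{(x-1)/(x+1)}\,e^{-c/(2x)}$. The denominator $q_{k,c}$ is an orthogonal polynomial for the explicit weight $W_c$, and the RH problem is for $q_{k,c}$ with $c=d/k$. In that setting $C_n$ is not a residue coefficient of the solution; it is a special value of the \emph{input} parameter.

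\textbf{The characterization of that special value is missing.} Your sentence ``the extremality condition forces $\theta$ to sit exactly at the edge of solvability'' is exactly what has to be proved, and it is the content of Sections 2--3:
\begin{itemize}
\item Peherstorfer's OPUC parametrization shows that admissible Pad\'e data give valid sign patterns. Admissible means real, simple, interlacing zeros in $(-1,1)$ with positive Gauss weights. It also gives $C_{2k-2}=\sup I_{k,0}$ (Corollary \ref{Cor2}).
\item The Chen--Ismail ladder operator gives the zero flow $\dot\zeta=\frac{\beta_k}{c}\,\frac{\zeta(\zeta^2-1)}{\zeta^2-\beta_k}$, along which gaps are nondecreasing. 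So zeros cannot reach $\pm1$, and admissibility can only be lost by a collision at $0$.
\item Hence $C_{2k-2}$ is the \emph{first} positive zero of $c\mapsto q_{k,c}(0)$ (Proposition \ref{zero-flow-prop}).
\end{itemize}
Only this first-zero statement lets the asymptotics $(-1)^{k/2}2^kq_{k,\theta^2/(4k)}(0)\to w(\theta)=e^{-v(\theta)/2}$ pin $\sqrt{4kC_{2k-2}}$ to the first zero $\theta_*$ of $w$, rather than to some other degeneration.

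\textbf{The error bounds are not uniform near $\theta_*$.} Your claim of uniform $\mc{O}(1/n)$ errors contradicts your own Step 3. The local parametrix contains $P_0$, whose second column carries $w^{-1}$, so the small-norm bounds are $\mc{O}(1/(k|w|))$ and degenerate exactly at $\theta_*$. The paper handles this as follows:
\begin{itemize}
\item it proves $g_k(\theta)=w(\theta)+\mc{O}(k^{-1}|w(\theta)|^{-2})$;
\item it tests the sign of $g_k$ at $\theta_*\pm k^{-1/6}$, where the error is smaller than $|w|$;
\item it uses monotonicity of $w$ on $(0,\theta_*)$ to exclude earlier zeros.
\end{itemize}
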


The identity $\pi \mathfrak{C}_\pi =\mathfrak{C}_1$ and \eqref{LL-scaling} give the  formula 
\[
\mathscr{C} = \frac{2\pi }{\theta_*^{2}}
\]

A decimal approximation to $\theta_*$ is
\[
\theta_* = 3.4081591997747264290903593760707477132636 \hdots
\]
which gives
\[
 \frac{2\pi}{\theta_*^2} =0.54092882190183058939288205899969038685\hdots
\]
which is in agreement with the approximation to $\mathscr{C}$ from \cite{BORS26-2}.  For the constant $L_\tau(1)$ introduced in \cite[(1.9)]{BORS26-2} we note the following numerics. Let the local expansion of $v$ at $\theta_*$ be given by
\[
v(\theta) = \log\frac{4}{(\theta - \theta_*)^2}  - \frac{1}{\theta_*}(\theta-\theta_*) + c_2 (\theta-\theta_*)^2+\hdots
\]
The decimal approximations for $c_2$ and for $ \frac{\mathscr{C}}{8\pi} +\frac{L_\tau(1)}{3\pi}$ agree to about $100$ digits.\footnote{While developing a Python routine to compute the decimal approximation of $\theta_*$, Claude Opus  decided (unprompted) to also compute the local expansion of $v$ at $\theta_*$ and to apply the PSLQ algorithm to $\mathscr{C}, L_\tau(1)$, and $c_2$. }

\medskip

We give a brief outline of the paper.  Section \ref{OPUC-section} applies a method of Peherstorfer  \cite{Peh88} to translate sign change patterns satisfying equation \eqref{trig-condition} below for $1\le \nu\le n$ into OPUC language. The main idea  is to keep $c$ arbitrary and develop rational functions with positive real part in the unit disk that parametrize the sign change patterns for every $c$ and trace the location of zeros and poles. At the critical $c$, equation \eqref{trig-condition} holds for $\nu = n+1$ as well.

Section \ref{Pade-section} reformulates this into the $[n/n]$ Pad\'e approximation problem at infinity for the function
\[
z\mapsto \left(\frac{z-1}{z+1}\right)^{1/2} e^{-c/(2z)}.
\]
and a Chen-Ismail  ladder operator \cite{CI97} is utilized to derive a first order ODE for the zero flow of the Pad\'e denominator $Q_{k,c}$. This is used to show that $c=C_n$ corresponds to an initial origin crossing of a zero, and as a consequence to the least positive zero of $c\mapsto Q_{k,c}(0)$.

 Section \ref{RHP-section} formulates the non-hermitian orthogonality of the Pad\'e denominator as a matrix Riemann-Hilbert problem. Setting $c = d/n$, Deift-Zhou steepest descent \cite{DZ93} leads to a local parametrix at the origin that identifies the least positive zero of $d\mapsto Q_{k,d/k}(0)$ as the singularity of a cosh-Gordon equation.

\medskip

\subsection*{Generative AI Statement} The author acknowledges extensive chats with Claude Opus about Chen-Ismail ladder operators and Deift-Zhou steepest descent. The article is fully written by the author. Claude Opus was used to double check the algebra and to aid with images and decimal approximations.

\section{Sign changes and paraorthogonal polynomials}\label{OPUC-section}

We recall the following characterization for $C_n$, cf.\ \cite[eq.\ (8)]{AmZie76}. If there exists real-valued $P_0\in \mc{P}_n$ with $P_0(0)=1$ and 
\[
\int_{-1}^1 x^\nu {\rm sgn}(P_0(x))dx =0, \qquad (\nu=1,\hdots,n)
\]
then  for any $P \in\mc{P}_n$ with $P(0) =1$,  writing $P = 1 + xG$ with $G\in \mc{P}_{n-1}$,  
\[
\|P\|_1 \ge  \left| \int_{-1}^1 (1+x G(x)) {\rm sgn}(P_0(x)) dx \right| = \left| \int_{-1}^1 {\rm sgn}(P_0(x)) dx\right|
\] 

In the other direction, a compactness argument  shows that an extremizer $P_0$ exists and if ${\rm sgn}(P_0)$ is not orthogonal to $x,\hdots,x^n$, then $P_1 = P_0 + xG$ with $G\in \mc{P}_{n-1}$ can be constructed with $\|P_1\|_1<\|P_0\|_1$.

Following \cite{Peh88}, we replace $x^\nu$ by the Chebyshev polynomials $U_\nu$ of the second kind, and we use $c$ for $\nu=0$, i.e., we seek $P_0$ so that
\begin{align}\label{poly-signs}
\begin{split}
\int_{-1}^1  U_\nu(x)  \sgn P_0(x) dx = c U_\nu(0)\qquad (\nu=0,\hdots,n)
\end{split}
\end{align}

Defining $H$ by  $H(\theta) = \sgn P_0(\cos \theta)$ for $0<\theta<\pi$, we have $H(0+)= (-1)^k$.  Since $U_\nu(x) = \sin((\nu+1)\theta)/\sin\theta$ where $x = \cos\theta$, we seek the smallest positive $c$ for which a $\pm1$-function $H$ on $(0,\pi)$ with $n$ sign changes exists which satisfies
\begin{align}\label{trig-condition}
\int_0^\pi H(\theta) \sin(\nu \theta) d\theta = cU_{\nu-1}(0), \qquad (\nu = 1,\hdots,n+1)
\end{align}

Let $0<\theta_1<\hdots<\theta_n<\pi$ and let $H$ be the $\pm 1$ function on $(0,\pi)$ with $H(0+)= \pm1$ changing sign exactly at $\theta_1,\hdots,\theta_n$. Then for $\ell\ge 1$, a direct calculation gives 
\begin{align}\label{sign-evaluation}
\int_0^\pi \sin(\ell\theta) H(\theta) d\theta =\pm  \frac1\ell \left( 1 +2\sum_{i=1}^n (-1)^i \cos(\ell\theta_i) - (-1)^{n+\ell}\right)
\end{align}
 
Let $n=2k$. For $\varphi_j, \psi_j\in (0,\pi)$ such that
\begin{align} \label{Phi-interlace}
\begin{split}
&0<\psi_1<\varphi_1<\hdots<\varphi_k<\pi \qquad(k\text{ odd}), \\
 &0<\varphi_1<\psi_1<\hdots<\psi_k<\pi \qquad (k\text{ even})
\end{split}
\end{align}
we define
\begin{align}\label{Dn-def}
D_n(z) = - \frac{(z+(-1)^k) \prod_{j=1}^k (1- 2\cos(\varphi_j) z + z^2 )}{(z-(-1)^k)\prod_{j=1}^k (1-2\cos(\psi_j) z + z^2)}
\end{align}

\begin{proposition} Let $n=2k$ and let $\varphi_j, \psi_j\in (0,\pi)$ satisfy \eqref{Phi-interlace}. Define $b_\ell$, $\ell\ge 1$ by
\begin{align}\label{log-coefficients}
\log D_n(z) =- \sum_{\ell=1}^\infty b_\ell z^\ell,
\end{align}
and let $H_n = {\rm sgn}(i D_n(e^{i\theta}))$. Then $H_n$ is a $\pm 1$ function on $(0,\pi)$ changing sign exactly at $\varphi_j$ and $\psi_j$ with $H_n(0+)=(-1)^{k+1}$ and
\begin{align}\label{bell}
b_\ell = \int_0^\pi \sin(\ell \theta) H_n(\theta) d\theta \qquad (\ell = 1,\hdots, n)
\end{align}
\end{proposition}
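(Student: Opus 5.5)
The plan is to compute both sides of \eqref{bell} explicitly and compare them using \eqref{sign-evaluation}, so that nothing deeper than the factorization of the quadratic factors on the unit circle is needed. Throughout write $a=(-1)^k$.

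\emph{Step 1: the coefficients $b_\ell$.} Since $-(z+a)/(z-a)=(1+az)/(1-az)$ when $a^2=1$, we can write
\[
D_n(z)=\frac{1+az}{1-az}\prod_{j=1}^k\frac{(1-e^{i\varphi_j}z)(1-e^{-i\varphi_j}z)}{(1-e^{i\psi_j}z)(1-e^{-i\psi_j}z)} .
\]
In particular $D_n(0)=1$, and all zeros and poles of $D_n$ lie on the unit circle. So $\log D_n$ has a single-valued branch in $\D$ vanishing at $0$, and its Taylor series can be read off factor by factor from $\log(1-wz)=-\sum_{\ell\ge1} w^\ell z^\ell/\ell$ and $\log\frac{1+az}{1-az}=2\sum_{\ell\text{ odd}}a^\ell z^\ell/\ell$. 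This gives
\[
b_\ell=\frac{2}{\ell}\sum_{j=1}^k\bigl(\cos(\ell\varphi_j)-\cos(\ell\psi_j)\bigr)-\frac{a^{\ell}}{\ell}\bigl(1-(-1)^\ell\bigr),
\]
valid for every $\ell\ge1$.

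\emph{Step 2: the sign pattern of $H_n$.} For $z=e^{i\theta}$ we have $1-2\cos(\varphi)z+z^2=2z(\cos\theta-\cos\varphi)$, and the powers of $z$ cancel between numerator and denominator. Moreover $\frac{1+z}{1-z}=i\cot(\theta/2)$ and $\frac{1-z}{1+z}=-i\tan(\theta/2)$. Hence
\[
iD_n(e^{i\theta})=\begin{cases}-\cot(\theta/2)\\ \tan(\theta/2)\end{cases}\cdot\prod_{j=1}^k\frac{\cos\theta-\cos\varphi_j}{\cos\theta-\cos\psi_j},
\]
with the first case for $k$ even and the second for $k$ odd. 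This is real on $(0,\pi)$ and changes sign exactly at the $\varphi_j$ and $\psi_j$; note that $\cot$ and $\tan$ have constant sign on $(0,\pi)$. Near $\theta=0^+$ every factor $\cos\theta-\cos\varphi_j$ and $\cos\theta-\cos\psi_j$ is negative, so the product is positive. Therefore $H_n(0+)=-1$ for $k$ even and $H_n(0+)=+1$ for $k$ odd, that is, $H_n(0+)=(-1)^{k+1}$.

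\emph{Step 3: the integrals.} Apply \eqref{sign-evaluation} with $n=2k$ to the ordered sign changes $\theta_1<\dots<\theta_{2k}$, with the overall sign $\pm=H_n(0+)=(-1)^{k+1}$. By \eqref{Phi-interlace}, for $k$ odd the odd-indexed $\theta_i$ are the $\psi_j$ and the even-indexed ones are the $\varphi_j$. Thus
\[
\sum_i(-1)^i\cos(\ell\theta_i)=\sum_j\bigl(\cos\ell\varphi_j-\cos\ell\psi_j\bigr).
\]
For $k$ even the roles are reversed and this sum changes sign, which is compensated by the sign of $H_n(0+)$. In both cases the trigonometric part therefore equals $\frac{2}{\ell}\sum_j(\cos\ell\varphi_j-\cos\ell\psi_j)$. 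The remaining term is $(-1)^{k+1}\frac1\ell\bigl(1-(-1)^{\ell}\bigr)=-\frac{a^\ell}{\ell}(1-(-1)^\ell)$. This holds because $1-(-1)^\ell$ vanishes for even $\ell$, and for odd $\ell$ we have $a^\ell=a=(-1)^k$. Comparing with Step 1 gives \eqref{bell}; in fact the identity holds for all $\ell\ge1$.

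The main obstacle is bookkeeping rather than analysis: one must track, in both parities of $k$, the interlacing order, the sign $H_n(0+)$, and the constant $a^\ell$ consistently. I would verify this by checking the two cases $k=1$ and $k=2$ by hand before writing the general argument.
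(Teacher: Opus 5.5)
Your proof is correct and follows the paper's route: you expand $\log D_n$ factor by factor to get $b_\ell$, check the sign pattern of $iD_n(e^{i\theta})$ directly, and then match the result against \eqref{sign-evaluation}, tracking the interlacing order and $H_n(0+)$ correctly for both parities of $k$. One small slip: near $\theta=0^+$ the factors $\cos\theta-\cos\varphi_j$ and $\cos\theta-\cos\psi_j$ are positive, not negative, since $\cos\theta\to1$; this does not affect your conclusion, because each quotient $(\cos\theta-\cos\varphi_j)/(\cos\theta-\cos\psi_j)$ is positive there either way.
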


\begin{proof} We observe that $D_n$ away from the poles is purely imaginary on the unit circle. The sign changes and the normalization at $\theta =0$ are a direct calculation. We note
\[
\log\frac{1\pm z}{1\mp z} = \pm \sum_{\ell=1}^\infty \left(1-(-1)^\ell\right) \frac{z^\ell}{\ell}
\]
and 
\[
\ln(1-2\cos(\theta)z+z^2) = -2\sum_{\ell=1}^\infty \frac{\cos(\ell \theta)}{\ell} z^\ell.
\]

This implies 
\[
b_\ell =\frac{1}{\ell} \left( (-1)^{k+1} (1 - (-1)^\ell) + 2\sum_{j=1}^k \big(\cos(\ell \varphi_j) - \cos(\ell \psi_j)\big)\right)
\]
and \eqref{sign-evaluation} completes the proof.
\end{proof}

\noindent{\it Remark.} The argument is taken from \cite[(2)-(6) and (11)]{Peh88} where it is proved for every $n$,  and $b_\ell$ not depending on $n$. (Peherstorfer normalizes $H$ by $H(0+) =1$.) 

\medskip

We associate with a probability measure $\mu$ on the unit circle an analytic function $D$, the Herglotz function of $\mu$, defined for $z\in\D$ by
\[
D(z) = \frac{1}{2\pi}\int_0^{2\pi} \frac{e^{i\theta} +z}{e^{i\theta} -z} d\mu(\theta)
\]

We define by $\Phi_n$ for $n=0,1,...$ the monic orthogonal polynomial of degree $n$ of $\mu$. For a polynomial of degree $\le n$ we define $P^*(z) = z^n \overline{P(1/\bar{z})}$. Orthogonal polynomials satisfy a difference equation
\[
\Phi_{n+1}(z) = z\Phi_n(z) -\bar{\alpha}_n \Phi_n^*(z)
\]
where $\alpha_n$ are called the Verblunsky coefficients.

A partial fraction decomposition of $D_n$ in \eqref{Dn-def} shows that this function is a linear combination of Herglotz functions for the measures $\mu_j = \delta_{e^{\pm i\psi_j}}$, $j=1,...,k$ and $\mu_0 = \delta_{(-1)^k}$, and the interlacing assumption implies that the coefficients in this linear combination are all positive. Since $D_n(0)=1$, the function $D_n$ is the Herglotz function of a measure $\mu_n$ with these $2k+1$ mass points. Its Verblunsky coefficients satisfy $|\alpha_j(D_n)|<1$ for $j<n$ and $|\alpha_n(D_n)|=1$, and $\Phi_{n+1}(\mu_n,z)$ is the monic polynomial in the denominator of $D_n$ (cf.\ \cite[Theorem 2.2.12]{Sim05}). Comparing constant terms gives $\alpha_n(D_n) = (-1)^{k}$. 

Writing $\Psi_{n+1}(z) = \Phi_{n+1}((-\alpha_k);z)$ for the polynomial of the second kind, \cite[(3.2.33)]{Sim05} gives
\[
D_n(z)+ \frac{\Psi_{n+1}(z)}{\Phi_{n+1}(z)} =\mc{O}(z^{n+1})
\]

Comparing degrees and limit at infinity shows that the error term vanishes identically, i.e., $D_n$ is a quotient of paraorthogonal polynomials\footnote{Interlacing of the zeros of paraorthogonal polynomials was proved in \cite{Sim07} and independently in \cite{Wong07}, although a version for real Verblunsky coefficients  is already in \cite[p. 243 ff.]{Peh88}.}.

 Let $F$ be analytic in the unit disk with $F(0)=1$ with Taylor coefficients $2 d_k(F)$ for $k\ge 1$. We define
\[
\Lambda(F) =  \sup\{n\in\N_0: \exists \textrm{  Herglotz function $D$ with $d_k(F)=d_k(D)$ for $k\le n$}\}.
\]

 Since
\[
 \sum_{k=1}^\infty U_{k-1}(0) z^k = \frac{z}{1+z^2},
\]
equations \eqref{trig-condition} and \eqref{log-coefficients} lead us to define $F_c$ by
\[
F_c(z) = \exp\left( -\frac{cz}{1+z^2} \right)
\]

We have $\Lambda(F)\in \N_0\cup\{\infty\}$. We define $\alpha_k(F)$  for $k< \Lambda(F)$ by 
\[
\alpha_k(F) = \alpha_k\left(D_{\Lambda(F)}\right)
\]
 and we note that $|\alpha_k(F)|< 1$ for $k\le  \Lambda(F)-2$. The Verblunsky theorem \cite[Section 3.2]{Sim05} implies that $\alpha_k(F)$ is well defined.
 If $F = F_c$, we write $d_k(c)$, $\alpha_k(c)$ and $\Lambda(c)$. 

\begin{proposition}\label{Prop2} Let $n =2k\in \N$  and $c$ real. The following are equivalent.
 \begin{enumerate}
\item $c = C_n$.
\item $\Lambda(c)\ge n$, $|\alpha_\ell(c)|<1$ for $\ell<n$ and $\alpha_n(c)\in \{\pm 1\}$.
\end{enumerate}
\end{proposition}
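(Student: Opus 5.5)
The plan is to go back and forth between the three descriptions already available: the extremal sign pattern (the Amir–Ziegler characterization together with \eqref{trig-condition}), the rational function $D_n$ of \eqref{Dn-def} with its coefficients $b_\ell$ from the first Proposition, and the Herglotz/Verblunsky description of $D_n$ as a quotient of paraorthogonal polynomials. The key observation is that \eqref{trig-condition} for $\nu=1,\dots,n$ says exactly that $-\log D_n$ and $cz/(1+z^2)$ have the same Taylor coefficients up to order $n$. Exponentiating, this is the same as $D_n(z)=F_c(z)+\mathcal{O}(z^{n+1})$, i.e.\ $d_\ell(D_n)=d_\ell(c)$ for $\ell\le n$. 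The extra condition at $\nu=n+1$ will be used to pin down the boundary Verblunsky coefficient.

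\emph{(1)$\Rightarrow$(2).} Take the extremizer $P_0$ with $c=C_n$. By the characterization recalled at the start of this section, $\sgn P_0$ is orthogonal to $x,\dots,x^n$. Rewriting this in the $U_\nu$ basis and using $\int\sgn P_0=\pm C_n$ (up to the sign convention) gives \eqref{poly-signs}, and hence the $\pm1$ function $H$ satisfying \eqref{trig-condition}.
\begin{itemize}
\item First I must check that $P_0$ has exactly $n=2k$ simple sign changes in $(-1,1)$, i.e.\ $H$ has $n$ jumps. Fewer sign changes would allow a polynomial in $\mathcal{P}_n$ vanishing at $0$ with the same sign pattern, contradicting the orthogonality.
\item Next I label the jumps alternately as $\varphi_j,\psi_j$ according to \eqref{Phi-interlace}, matching $H(0+)=(-1)^{k+1}$ (or apply the reflection $\theta\mapsto\pi-\theta$ to fix parity).
\item The first Proposition then shows $b_\ell$ equals the left side of \eqref{trig-condition} for $\ell\le n$, so $d_\ell(D_n)=d_\ell(c)$ and $\Lambda(c)\ge n$.
\item From the discussion after the first Proposition, $D_n$ is the Herglotz function of a measure with $2k+1$ mass points, with $|\alpha_\ell|<1$ for $\ell<n$ and $\alpha_n=(-1)^k\in\{\pm1\}$. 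Verblunsky's theorem transfers this to $\alpha_\ell(c)$.
\end{itemize}

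\emph{(2)$\Rightarrow$(1).} Given $\Lambda(c)\ge n$ with $|\alpha_\ell(c)|<1$ for $\ell<n$ and $\alpha_n(c)=\pm1$, the Verblunsky coefficients $\alpha_0,\dots,\alpha_n$ determine a measure supported on the $n+1$ zeros of a paraorthogonal polynomial $\Phi_{n+1}$. This measure is symmetric under conjugation because the coefficients $d_\ell(c)$ are real, and its Herglotz function matches $F_c$ to order $n$. Using the zero-interlacing of paraorthogonal polynomials (Simon/Wong, cited in the footnote), this Herglotz function has the form \eqref{Dn-def}. Its mass points come in conjugate pairs $e^{\pm i\psi_j}$ plus one point at $\alpha_n$-dependent $\pm1$, and the zeros $e^{\pm i\varphi_j}$ interlace. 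The first Proposition then produces $H_n$ satisfying \eqref{trig-condition} for $\nu\le n$. Pulling back via $x=\cos\theta$, we get a polynomial $P_0$ with $n$ sign changes at $\cos\varphi_j,\cos\psi_j$ whose sign is orthogonal to $x,\dots,x^n$. The sufficiency half of the Amir–Ziegler characterization then gives $\|P\|_1\ge|\int\sgn P_0|$ for all admissible $P$, with equality at $P_0$, so $C_n=|\int \sgn P_0|=|c|$.

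\emph{Expected obstacle.} The main difficulty is bookkeeping of signs and parity: showing that $\alpha_n=\pm1$ forces exactly the parity $(-1)^k$ needed for the mass point at $(-1)^k$ in \eqref{Dn-def}, and identifying $c$ with $+C_n$ rather than $-C_n$. I would settle this by comparing constant terms of $\Phi_{n+1}$ and evaluating $D_n$ near $z=\pm1$, where the real-axis behaviour determines the sign of $c$. I would also restrict to $c>0$ if the statement implicitly requires it, since $F_{-c}(z)=F_c(-z)$ maps one case to the other.
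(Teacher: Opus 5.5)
Your route is the paper's (extremal sign pattern $\leftrightarrow$ $D_n$ via the first Proposition $\leftrightarrow$ paraorthogonal Herglotz function). You also correctly announce that the condition $\nu=n+1$ is what pins down $\alpha_n$. But neither direction actually uses it, and the whole content of the equivalence lies there.

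In $(2)\Rightarrow(1)$ you only obtain $D=F_c+\mc{O}(z^{n+1})$ and \eqref{trig-condition} for $\nu\le n$. Since \eqref{trig-condition} at index $\nu$ is \eqref{poly-signs} at index $\nu-1$, this gives $\int_{-1}^1 p\,h\,dx=c\,p(0)$ only for $p\in\mc{P}_{n-1}$. That is, $h(x)=H_n(\arccos x)$ is orthogonal only to $x,\dots,x^{n-1}$. Orthogonality to $x^n$ is precisely \eqref{trig-condition} at $\nu=n+1$, so your claim that the sign is orthogonal to $x,\dots,x^n$ is unsupported. This is not cosmetic. The only information you extract from the hypothesis is a measure with unimodular $n$-th Verblunsky coefficient whose Herglotz function agrees with $F_c$ to order $n$. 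That is also available for $c=0$: prescribing $\alpha_n=(-1)^k$ gives $\frac{1+(-1)^kz^{n+1}}{1-(-1)^kz^{n+1}}=1+\mc{O}(z^{n+1})$, which has the form \eqref{Dn-def}. The same chain would then give $C_n=0$.

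The missing step is the one the paper takes from \cite[(3.2.33)]{Sim05} at index $n+1$. By definition, $\alpha_n(c)$ is the $n$-th Verblunsky coefficient of a Herglotz function agreeing with $F_c$ through order $n+1$. Since $\alpha_0,\dots,\alpha_n$ determine $d_1,\dots,d_{n+1}$, the paraorthogonal $D_n$ built from $\alpha_0(c),\dots,\alpha_n(c)$ satisfies $D_n=F_c+\mc{O}(z^{n+2})$. The computation behind \eqref{bell} is valid for every $\ell\ge1$: both $b_\ell$ and \eqref{sign-evaluation} are given by the same closed formula for all $\ell$. This yields $b_{n+1}=cU_n(0)$, i.e.\ \eqref{trig-condition} at $\nu=n+1$.

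The same omission occurs in $(1)\Rightarrow(2)$. Agreement to order $n$ transfers only $\alpha_\ell$ with $\ell<n$. To get $\alpha_n(c)=\alpha_n(D_n)$ you must use \eqref{trig-condition} at $\nu=n+1$ (which you do have, from orthogonality to $x^n$) together with \eqref{bell} at $\ell=n+1$.

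Two smaller points. First, your count of sign changes is incomplete. With sign changes $t_1,\dots,t_m$, the polynomial $x^2\prod(x-t_i)$ rules out only $m\le n-2$, because $x\prod(x-t_i)$ acquires a sign change at $0$. To exclude $m=n-1$, replace $P_0$ by its even part, which is again an extremizer; the pattern is then symmetric and $m$ is even.

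Second, the reflection $\theta\mapsto\pi-\theta$ cannot fix parity, since with $2k$ sign changes it preserves both \eqref{trig-condition} and $H(0+)$. Comparing constant terms, $\Phi_{n+1}(0)=-\bar\alpha_n$, shows that the real mass point is $\alpha_n$ itself, so $\alpha_n=\pm1$ does not force $(-1)^k$. The case $\alpha_n(c)=(-1)^{k+1}$ must be reduced to $-c$ via $F_{-c}(z)=F_c(-z)$ and $\alpha_j\mapsto(-1)^{j+1}\alpha_j$, as in the paper. Because condition (2) is invariant under $c\mapsto -c$, your conclusion $|c|=C_n$ is the form in which the equivalence can actually be proved.
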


\begin{proof}  
If $c = C_n$, there exists $H$ with $n$ sign changes satisfying \eqref{trig-condition} for $\nu = 1,...,n+1$. The function $D_n$ in \eqref{Dn-def} satisfies $|\alpha_j(D_n)|<1 $ for $j<n$ and $|\alpha_n(D_n)|=1$. Since \eqref{trig-condition} holds for $\nu\le n+1$, we have
\[
D_n(z) = \exp\left( -c\sum_{\nu=1}^{n+1}  U_{\nu-1}(0) z^\nu \right) + \mc{O}(z^{n+2})
\]
and this gives $\alpha_j(D_n) = \alpha_j(c)$ for $j\le n$.

For the second claim, assume first $\alpha_n(c) = (-1)^k$. Defining 
\[
\Phi_{n+1}(z) = \Phi_{n+1}(\alpha_0(c),\hdots,\alpha_n(c),(-1)^k;z),
\]
 the function $D_n = -\Psi_{n+1}/\Phi_{n+1}$ has the form \eqref{Dn-def}, hence \eqref{bell} gives \eqref{trig-condition} for $\ell = 1,...,n$. Since $\alpha_n(D_n) = \alpha_n(c) = (-1)^k$, \cite[(3.2.33)]{Sim05} for index $n+1$ gives
\[
D_n(z) = \exp\left( -\sum_{\nu=1}^{n+1} c U_{\nu-1}(0) z^\nu \right) +\mc{O}(z^{n+2})
\]
which is  \eqref{trig-condition}  for $\ell = n+1$. For $\alpha_n(c) = (-1)^{k+1}$ , under $z\mapsto -z$, the Verblunsky coefficients pick up a sign $(-1)^{j+1}$. Since $F_{-c}(z) = F_c(-z)$, for even $n$ we have $\alpha_n(-c) = -\alpha_n(c)$.
\end{proof}

\begin{corollary}\label{Cor2} Let $n=2k\in \N$ with $k\ge 2$. Assume $\varphi_j=\varphi_j(c)$ and $\psi_j=\psi_j(c)$, $1\le j\le k$ are continuous in $c$ such that
\begin{align}\label{bn-fixing}
D_{2k}(z) = \exp\left(-\frac{cz}{1+z^2}\right) + \mc{O}(z^{2k+1})
\end{align}
holds. Define $I_k = \{c\in\R:\text{ condition \eqref{Phi-interlace} holds}\}$, let $I_{k,0}$ be the connected component  of $I_k$ that contains $0$, and let $H=H_c$ be the corresponding $\pm1$ function on $(0,\pi)$ with $H_c(0+)=(-1)^{k+1}$. If $\sup I_{k,0}<\infty$, then $ C_{2k-2} = \sup I_{k,0}$. 
\end{corollary}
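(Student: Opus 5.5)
The plan is to combine Proposition~\ref{Prop2}, applied with $n=2k-2$, with a limiting argument at the endpoint $c^*=\sup I_{k,0}$. For $c\in I_{k,0}$, the discussion after \eqref{Dn-def} shows that $D_{2k}$ is the Herglotz function of a measure with $2k+1$ mass points. Hence $|\alpha_j(D_{2k})|<1$ for $j<2k$. By \eqref{bn-fixing} and the Verblunsky theorem these coefficients agree with $\alpha_j(c)$ for $j\le 2k-1$; this step only uses matching of the Taylor coefficients up to order $2k$. In particular $|\alpha_{2k-2}(c)|<1$ for every $c\in I_{k,0}$, so by Proposition~\ref{Prop2} no such $c$ equals $C_{2k-2}$. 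This will give $C_{2k-2}\notin I_{k,0}$, provided we also know $C_{2k-2}\ge 0$ (clear from the definition) and that $C_{2k-2}$ is not separated from $0$ by a gap of $I_k$; the latter comes out of the endpoint analysis below.

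Next I would show that $\alpha_{2k-2}(c^*)\in\{\pm1\}$ and $|\alpha_\ell(c^*)|<1$ for $\ell<2k-2$. As $c\uparrow c^*$, the nodes $\varphi_j(c),\psi_j(c)$ converge by continuity. The functions $D_{2k}(\cdot;c)$ are Herglotz functions normalized by $D(0)=1$, so they form a normal family, and the limit $D^*$ is again a Herglotz function of a probability measure $\mu^*$. Moreover the Taylor coefficients of $D^*$ still match those of $F_{c^*}$ to order $2k$. The measure $\mu^*$ is the weak limit of the measures $\mu_{2k}(c)$, so it is supported on the limits of the nodes, with limiting nonnegative weights.

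Interlacing fails at $c^*$ in one of two ways: two adjacent nodes collide, or an extreme node reaches $0$ or $\pi$ and meets the fixed node $\pm1$ (or its conjugate). In the first case the corresponding factors of numerator and denominator in \eqref{Dn-def} cancel. In the second case a factor $(1-z)^2$ or $(1+z)^2$ would appear in the denominator. That is impossible for a Herglotz function, whose poles on the circle are simple, so it must cancel against a numerator factor, which brings us back to the first case. Either way $D^*$ becomes a ratio of the form \eqref{Dn-def} with $k$ replaced by $k-1$, that is, a measure with $2k-1$ mass points. Then $|\alpha_\ell|<1$ for $\ell<2k-2$ and $|\alpha_{2k-2}|=1$, and since the Taylor coefficients still match through order $2k\ge 2k-1$, Proposition~\ref{Prop2} gives $c^*=C_{2k-2}$.

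The main obstacle is this degeneration step: ruling out simultaneous collapse of more than one pair of nodes, or of mass points whose weights tend to zero without the nodes colliding, either of which would drop the number of mass points below $2k-1$. My first attempt would count Verblunsky coefficients. Collapse to fewer points would force some $|\alpha_\ell(c^*)|=1$ with $\ell<2k-2$. Since the $\alpha_\ell(c)$ depend continuously on $c$, there would then be a smaller value $c'\le c^*$ at which $|\alpha_\ell(c')|=1$ first occurs. Proposition~\ref{Prop2} (applied with $n=\ell$, adjusting for the parity of $\ell$ via the symmetry $F_{-c}(z)=F_c(-z)$) would identify such a $c'$ with a lower-index constant, and that would have to be excluded using the monotonicity $C_{2k-2}<C_{2k-4}<\cdots$, which is immediate from $\mathcal{P}_{2k-4}\subset\mathcal{P}_{2k-2}$ up to strictness. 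Finiteness of $c^*$ is a hypothesis. Combining the first and second steps then gives $C_{2k-2}=\sup I_{k,0}$.
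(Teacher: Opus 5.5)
Your first step is the paper's: for $c\in I_{k,0}$ the measure has $2k+1$ mass points, so $|\alpha_{2k-2}(c)|<1$ and Proposition~\ref{Prop2} excludes $c=C_{2k-2}$. Since $I_{k,0}$ is an interval containing $0$ and $C_{2k-2}>0$, this gives $C_{2k-2}\ge\sup I_{k,0}$; the proviso about a gap of $I_k$ is not needed.

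The genuine gap is in your treatment of the endpoint degeneration at $c^*=\sup I_{k,0}$. Take $k$ even, so \eqref{Dn-def} has a fixed pole at $z=1$ and a fixed zero at $z=-1$.
\begin{itemize}
\item If $\varphi_1\to 0$, the numerator acquires $(1-z)^2$ against the simple fixed pole $(z-1)$, leaving a simple zero at $z=1$.
\item If $\psi_k\to\pi$, the denominator acquires $(1+z)^2$, which cancels against the fixed numerator factor $(1+z)$ only down to a simple pole at $z=-1$.
\end{itemize}
For $k$ odd the roles of $\pm1$ are swapped. In neither case is there a double pole, so the Herglotz property excludes nothing. Nor does either case reduce to an adjacent merger. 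A single endpoint event yields a Herglotz function of a measure with $2k$ mass points, not $2k-1$. For such a limit $|\alpha_{2k-2}(c^*)|<1$ and $|\alpha_{2k-1}(c^*)|=1$, so Proposition~\ref{Prop2} with $n=2k-2$ does not apply. Your Verblunsky-counting paragraph only addresses losing \emph{too many} mass points, so it does not cover this case either. This is exactly where the paper needs an extra input: it argues by symmetry that sign changes reach $\theta=0$ and $\theta=\pi$ simultaneously, so that $H_{c^*}$ has at most $2k-2$ sign changes.

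In your framework the missing idea can be supplied as follows. We have $F_c(z)F_c(-z)=1$. The map $z\mapsto -z$ multiplies $\alpha_j$ by $(-1)^{j+1}$, and $D\mapsto 1/D$ multiplies it by $-1$. Hence $\alpha_j(c)=(-1)^j\alpha_j(c)$, i.e.\ $\alpha_j(c)=0$ for odd $j$. Equivalently, uniqueness of $D_{2k}$ forces $D_{2k}(z)D_{2k}(-z)=1$, so $\varphi_j+\psi_{k+1-j}=\pi$, and $\varphi_1\to0$ exactly when $\psi_k\to\pi$. Applying this to $D^*$ and $1/D^*(-z)$, which agree with $F_{c^*}$ to order $2k$, shows that any limit measure with $m\le 2k$ mass points has $m-1$ even. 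This rules out $m=2k$, and $m=2k-1$ gives $c^*=C_{2k-2}$ by Proposition~\ref{Prop2}. For $m\le 2k-3$ you do not need strict monotonicity: Proposition~\ref{Prop2} gives $c^*=C_{m-1}$, and $\mathcal{P}_{m-1}\subset\mathcal{P}_{2k-2}$ together with your first step yields $c^*=C_{m-1}\ge C_{2k-2}\ge c^*$. The paper avoids this case analysis by applying the $L^1$ duality characterization from the start of Section~\ref{OPUC-section} directly to $H_{c^*}$. That argument works for any number $\le 2k-2$ of sign changes, but it too needs the two-endpoint symmetry.
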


\begin{proof} We note that for $c=0$, 
\[
D_{2k}(z)  =\frac{1 + (-1)^k z^{2k+1}}{1-(-1)^k z^{2k+1}}
\]
hence $0\in I_k$ and $I_{k,0}\neq \emptyset$. If $c\in I_{k,0}$, then \eqref{Phi-interlace} implies that $|\alpha_{2k-2}(D_{2k})|<1$, and \eqref{bn-fixing} implies that $\alpha_{2k-2}(D_{2k}) = \alpha_{2k-2}(c)$. Hence $C_{2k-2}\ge \sup I_{k,0}$. 

Since the sign change locations of $H_c$ are assumed to be continuous functions in $c$, strict interlacing can fail by two adjacent sign changes merging (in this case, $H_{\sup I_{k,0}}$ has $\le 2k-2$ sign changes), or by a sign change reaching $\theta=0$ or $\theta=\pi$. In the latter case, the symmetry for the extremal associated with \eqref{Cn-extremal} implies that sign changes reach both endpoints, so  number of sign changes of $H_{\sup I_{k,0}}$ is $\le 2k-2$ as well. Relation \eqref{bn-fixing} implies \eqref{trig-condition} for $\ell = 1,\hdots , 2k$ and that $c$ is contained in the interval $I_{k,0}$, and by continuity, this remains true for $c = \sup I_{k,0}$. It follows that  $C_{2k-2} = \sup I_{k,0}$.
\end{proof}
 
Heuristically, one expects that the sign change count drops by $2$ and not by $4$, so merging is expected to happen either at the origin, or at the endpoints. It turns out that this is indeed the case; with our normalization, the origin merging event happens at $c = C_{2k-2}$ and the endpoint event at $c = -C_{2k-2}$.

\section{Pad\'e Approximations and zero dynamics}\label{Pade-section}

We note that $D_{2k}$ in \eqref{Dn-def} satisfies
\[
D_{2k}(z)= \frac{(-1)^k+z}{(-1)^k-z} \cdot \frac{\prod_{j=1}^k (z+\frac1z - 2\cos(\varphi_j))}{\prod_{j=1}^k(z+\frac1z - 2\cos(\psi_j))}
\]

Assuming that \eqref{bn-fixing} holds, from $D_{2k}(1/z) = -D_{2k}(z)$  and   $F_c(z) = F_c(1/z)$ it follows that
\[
D_{2k}(z) = F_c(z)  +\mc{O}(z^{-2k-1})
\]

Thus, we are led to consider the Pad\'e approximation  in $x=\frac12(z+z^{-1})$  at infinity to  $F_c(z) [(1+z)/(1-z)]^{(-1)^{k+1}}$. 

\begin{figure}[ht]
  \centering
  \includegraphics[width=\textwidth]{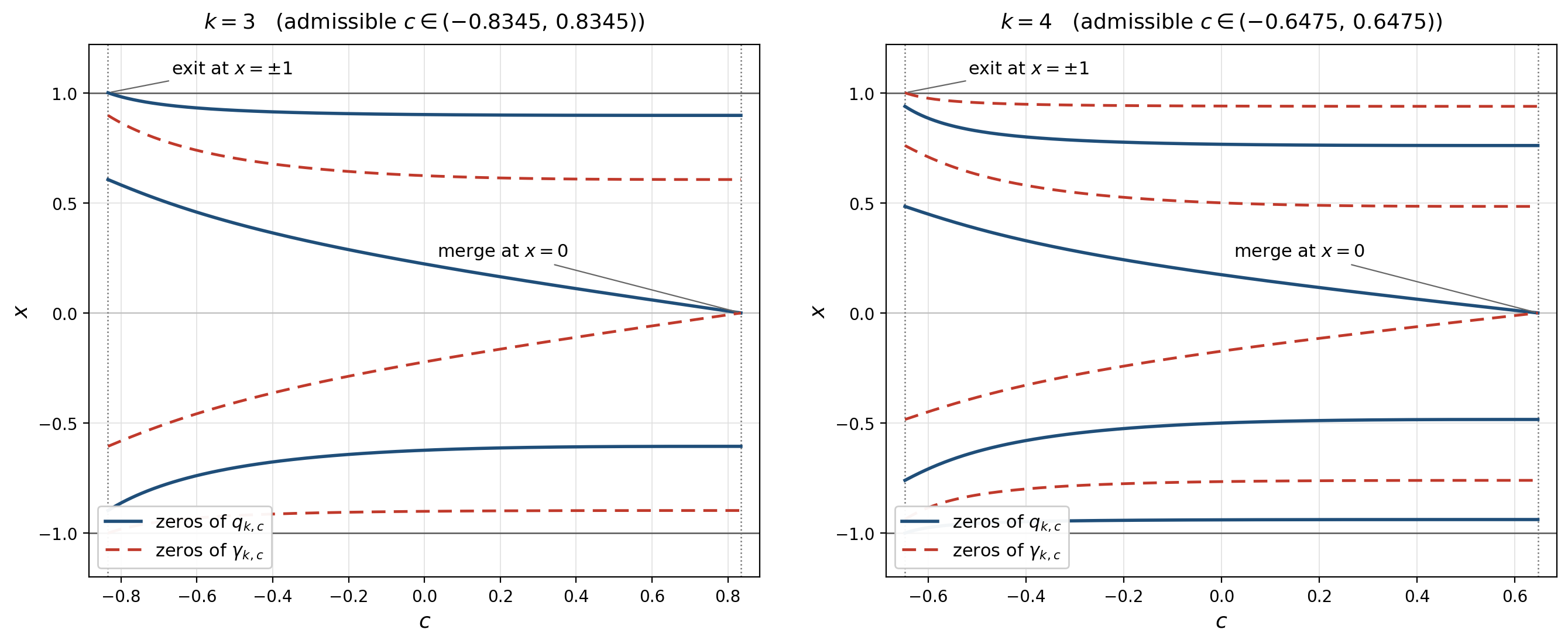}
  \caption{Zeros of $q_{k,c}$ (solid) and $\gamma_{k,c}$ (dashed) as functions of $c$
    for $k=3, 4$, showing the exit at $x=\pm1$ and the merge at $x=0$.}
  \label{fig:pqzeros}
\end{figure}

Define $J_k$ to be the set of real $c$ so that 
there exist polynomials $\gamma_{k,c}, q_{k,c}$ of exact degree $k$, $q_{k,c}$ monic, having simple zeros in $(-1,1)$ that strictly interlace,
\begin{align}\label{Pade-definition-qkc}
\begin{split}
&\sqrt{\frac{x-1}{x+1}}e^{-c/(2x)} - \frac{\gamma_{k,c}(x)}{q_{k,c}(x)} = \mc{O}(x^{-2k-1}),\qquad k\textrm{ even}\\
&\sqrt{\frac{x+1}{x-1}}e^{-c/(2x)} - \frac{\gamma_{k,c}(x)}{q_{k,c}(x)} = \mc{O}(x^{-2k-1}),\qquad k\textrm { odd}
\end{split}
\end{align}
and let $J_{k,0}$ be the connected component of $J_k$ containing $0$. (For $c=0$, the polynomials can be explicitly computed from orthogonal polynomial theory for  Jacobi weights, and a direct check gives $0\in J_k$.)

For the remaining part of the paper we consider $k$ even. We define
\[
W_{c}(x) = \frac{1}{2i} \left(\sqrt{\frac{x-1}{x+1}}\right) e^{-c/(2x)}
\]
with branch on $[-1,1]$ and $2iW_c$ positive for real $x$ with $|x|>1$, so that $W_c$ is analytic in $\C\backslash[-1,1]$ and has limit $1/(2i)$ at infinity. Let $\Gamma_R$ be the circle with center at the origin and radius $R>1$ oriented {\it clockwise}. Define the functional
\[
\mc{L}_c[f] =  \int_{\Gamma_R} f(y) W_c(y) dy
\]

 Clockwise orientation and the factor $1/(2i)$ for the weight are chosen so that for $c=0$, the functional $\mc{L}_c$ is positive, and that  the jump $(W_c)_+ - (W_c)_-$ in  \eqref{omegac-def} equals $\omega_c$.   Denote by $x_\ell(c) $, $\ell = 1,...,k$ the zeros of $q_{k,c}$.

The arguments in the proofs of Lemma \ref{positive-weight}, \ref{Pade-basics}, and \ref{lemma4} are standard (e.g., \cite[Chapter 5.6]{BGM96}), but we include them here since the usual argument flow derives interlacing as a consequence.

\begin{lemma}\label{positive-weight} Let $k$ be even and $c\in J_{k,0}$. Define $\lambda_{\ell,c}$ by
\[
\lambda_{\ell,c} = -\frac{\pi \gamma_{k,c}(x_\ell(c))}{q_{k,c}'(x_\ell(c))}
\]
Then $\lambda_{\ell,c}>0$ for $1\le \ell\le k$. Defining $\nu_c = \sum \lambda_{\ell, c} \delta_{x_\ell(c)}$, 
\[
\mc{L}_c[p] = \int pd\nu_c, \qquad (p \in \mc{P}_{2k-1})
\]
the measure $\nu_c$ is positive, and the largest zero in the interlacing belongs to $\gamma_{k,c}$. 
\end{lemma}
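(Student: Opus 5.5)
The quadrature identity comes from a residue computation, and positivity of the $\lambda_{\ell,c}$ from a continuity argument in $c$ started at $c=0$.

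\emph{Quadrature.} For $p\in\mc{P}_{2k-1}$ I would write
\[
W_c=\frac{1}{2i}\frac{\gamma_{k,c}}{q_{k,c}}+\mc{O}(x^{-2k-1})
\]
on $\Gamma_R$, which is the Pad\'e relation \eqref{Pade-definition-qkc} for $k$ even. The product $p\cdot\mc{O}(x^{-2k-1})$ is $\mc{O}(x^{-2})$ at infinity and analytic outside $[-1,1]$. Deforming $\Gamma_R$ to a large circle shows its integral vanishes. Hence
\[
\mc{L}_c[p]=\frac{1}{2i}\int_{\Gamma_R}\frac{p\,\gamma_{k,c}}{q_{k,c}}\,dy .
\]
The integrand is rational and its poles are the simple zeros $x_\ell(c)$ inside $\Gamma_R$. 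Clockwise orientation gives the factor $-2\pi i$, so
\[
\mc{L}_c[p]=-\pi\sum_\ell p(x_\ell)\frac{\gamma_{k,c}(x_\ell)}{q_{k,c}'(x_\ell)}=\int p\,d\nu_c .
\]
Strictly, this step also needs $\deg(p\gamma_{k,c})-\deg q_{k,c}\le 2k-2$, so that there is no residue at infinity. With $\deg\gamma_{k,c}=k$ and $\deg p\le 2k-1$ this fails. I would therefore first do polynomial division $\gamma_{k,c}=q_{k,c}+r$ with $\deg r<k$ (the leading coefficient is $1$ since $2iW_c\to1$), and check the formula separately. Alternatively, I would simply apply the residue theorem on the bounded region, where the contribution at infinity is absent because we integrate over a finite circle. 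The latter is easier: the residue theorem on the circle directly gives the sum of residues at the interior poles, so the formula holds with no degree restriction beyond that used to drop the error term.

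\emph{Positivity.} The sign of $\lambda_{\ell,c}$ is the sign of $-\gamma_{k,c}(x_\ell)/q'_{k,c}(x_\ell)$. Since $q_{k,c}$ is monic with simple real zeros, $\sgn q'_{k,c}(x_\ell)=(-1)^{k-\ell}$ when the zeros are ordered increasingly. Strict interlacing means $\gamma_{k,c}$ has exactly one zero between consecutive zeros of $q_{k,c}$, so $\gamma_{k,c}(x_\ell)$ alternates in sign. Consequently all $\lambda_{\ell,c}$ have a common sign, and that sign is determined by which polynomial owns the largest zero. If $\gamma_{k,c}$ owns it, then $\gamma_{k,c}(x_k)<0$ while $q'_{k,c}(x_k)>0$, so $\lambda_{k,c}>0$ and hence all $\lambda_{\ell,c}>0$. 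If instead $q_{k,c}$ owns it, all weights are negative.

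\emph{Continuity in $c$.} On $J_{k,0}$ the zeros vary continuously and stay strictly interlaced, so which polynomial owns the largest zero cannot change. Hence the common sign is constant on the connected set $J_{k,0}$. At $c=0$, $\mc{L}_0$ is the positive functional obtained by collapsing the contour onto the Jacobi-type weight $\omega_0$. Taking $p=1$ gives $\sum\lambda_{\ell,0}=\mc{L}_0[1]>0$, so the sign is positive at $c=0$ and therefore on all of $J_{k,0}$. This also shows that the largest zero belongs to $\gamma_{k,c}$.

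The main obstacle I expect is justifying the continuity of the zeros in $c$, and hence constancy of the sign, within $J_{k,0}$. The zeros are algebraic functions of the Pad\'e data, which is determined by a linear system whose coefficients are entire in $c$. As long as the degrees are exact and the zeros simple, the implicit function theorem gives continuity. A sign change would force a common zero of $\gamma_{k,c}$ and $q_{k,c}$, which contradicts strict interlacing. I would spell this argument out.
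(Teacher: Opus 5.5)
Your quadrature step and your sign bookkeeping are correct and match the paper in substance. The paper gets the quadrature from $\mc{L}_c[q_{k,c}y^j]=0$ for $j\le k-1$ and the splitting $p=q_{k,c}s+r$ with $r$ the Lagrange interpolant. You instead integrate $p\gamma_{k,c}/q_{k,c}$ directly, which is equally valid. Your worry about a residue at infinity is unfounded, as you conclude yourself, so the polynomial-division detour can be dropped.

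\textbf{Fixing the common sign.} Here you genuinely differ from the paper. You carry the sign from $c=0$ by continuity of the zeros. The paper instead evaluates $\mc{L}_c[1]$ at \emph{every} $c$: from $2iW_c(y)=1-(1+\tfrac{c}{2})y^{-1}+\cdots$ one gets $\sum_\ell\lambda_{\ell,c}=\mc{L}_c[1]=\pi(1+\tfrac{c}{2})$. On $J_{k,0}$ the $\lambda_{\ell,c}$ are nonzero with a common sign, so this sum cannot vanish there, and hence $-2\notin J_{k,0}$. Connectedness then gives $J_{k,0}\subset(-2,\infty)$, and positivity follows pointwise in $c$ with no continuity of zeros needed.

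\textbf{The gap in your continuity step.} The step you flagged is not yet justified as written. The linear system determines $q_{k,c}$ only if the Hankel matrix $H_k(c)=(\mc{L}_c[y^{i+j}])_{0\le i,j\le k-1}$ is nonsingular. In the paper this nonsingularity (Lemma~\ref{Pade-basics}) is \emph{deduced from} the lemma you are proving, so quoting it would be circular.

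You can supply it from your own quadrature, since $i+j\le 2k-2$:
\[
H_k(c)=V^{T}\,\mathrm{diag}(\lambda_{1,c},\dots,\lambda_{k,c})\,V .
\]
Here $V$ is the Vandermonde matrix of the distinct nodes $x_\ell(c)$, and every $\lambda_{\ell,c}\neq0$ by strict interlacing, so $\det H_k(c)\neq0$. Cramer's rule then makes the coefficients of $q_{k,c}$ and $\gamma_{k,c}$ continuous near each point of $J_{k,0}$, hence so are their largest zeros. With this inserted your proof is complete, but the paper's one-line evaluation of $\mc{L}_c[1]$ makes the continuity argument unnecessary.
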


\begin{proof} Since $q_{k,c}$ is monic and has all of its zeros in $(-1,1)$, it is real valued on the real line. From $q_{k,c} (2iW_c) - \gamma_{k,c} = \mc{O}(y^{-k-1})$, and $q_{k,c} (2iW_c) = y^k +\mc{O}(y^{k-1})$, it follows that $\gamma_{k,c}$ is also monic (with zeros in $(-1,1)$) and hence real valued.  The residue theorem gives
\[
\lambda_{\ell,c} = \mc{L}_c\left[ \frac{q_{k,c}(y)}{(y-x_\ell(c)) q_{k,c}'(x_\ell(c))}\right]
\]

Let $p\in \mc{P}_{2k-1}$ and write $p = q_{k,c} s + r$ where $\deg s, r\le k-1$, and
\[
r(y) = \sum_{\ell =1}^k  p(x_\ell(c)) \frac{q_{k,c}(y)}{(y-x_\ell(c)) q_{k,c}'(x_\ell(c))}
\]

We have $\mc{L}_c[q_{k,c} y^j]=0$ for $j\le k-1$, and it  follows that $\mc{L}_c$ restricted to $\mc{P}_{2k-1}$ is integration against $\nu_c$.  Furthermore,
\[
\lambda_{\ell,c} = \frac{1}{2i q_{k,c}'(x_\ell(c))} \int_{\Gamma_R} \frac{\gamma_{k,c}(y)}{y - x_\ell(c)} dy = -\frac{\pi \gamma_{k,c}(x_\ell(c))}{q_{k,c}'(x_\ell(c))}
\]

Since $2iW_c(y) = 1 - (1 + \frac{c}{2})y^{-1} +\hdots$, 
\[
\sum_{\ell =1}^k \lambda_{\ell, c} = \mc{L}_c[1] = \pi \left(1 + \frac{c}{2}\right)
\]
and hence  $-2\notin J_{k,0}$.  Strict interlacing implies that all $\lambda_{\ell,c}$ have the same sign. Since $\mc{L}_c[1]>0$ for $c>-2$, that sign must be positive for $c\in J_{k,0}$ and all $\lambda_{\ell,c}$. Strict interlacing gives two choices for the sign of $ \gamma_{k,c}(x_\ell(c))/ q_{k,c}'(x_\ell(c))$, and investigating both shows that $\lambda_{\ell,c}>0$  requires the largest zero to belong to $\gamma_{k,c}$. 
\end{proof}

\begin{lemma}\label{Jk0=Ik0} $J_{k,0} = I_{k,0}$.
\end{lemma}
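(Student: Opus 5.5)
The plan is to use the Joukowski map $x=\tfrac12(z+z^{-1})$ to transfer interlacing configurations back and forth, in both directions, and then compare connected components.

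For $c\in I_{k,0}$, with $k$ even, we have angles $\varphi_j(c),\psi_j(c)$ satisfying \eqref{Phi-interlace}, namely $0<\varphi_1<\psi_1<\dots<\psi_k<\pi$. We have $D_{2k}$ as in \eqref{Dn-def}, and \eqref{bn-fixing} holds. We set
\[
q_{k,c}(x)=\prod_{j=1}^k (x-\cos\psi_j),\qquad \gamma_{k,c}(x)=\prod_{j=1}^k(x-\cos\varphi_j).
\]
Since $\cos$ is decreasing on $(0,\pi)$, these are monic of degree $k$ with simple, strictly interlacing zeros in $(-1,1)$. The largest zero is $\cos\varphi_1$, which belongs to $\gamma_{k,c}$, in agreement with Lemma \ref{positive-weight}. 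From the factorization of $D_{2k}$ displayed at the start of Section \ref{Pade-section}, we get
\[
D_{2k}(z)=\frac{1+z}{1-z}\cdot\frac{\gamma_{k,c}(x)}{q_{k,c}(x)}.
\]
Using \eqref{bn-fixing} together with the symmetry $z\mapsto 1/z$ (already noted in the text), we obtain $D_{2k}(z)=F_c(z)+\mathcal O(z^{-2k-1})$ as $z\to\infty$. We then multiply by $(1-z)/(1+z)$. With the branch of the square root fixed by the positivity convention for $|x|>1$, this factor equals $\sqrt{(x-1)/(x+1)}$ up to sign, and $F_c(z)=e^{-c/(2x)}$. Because $z\sim 2x$, the error $\mathcal O(z^{-2k-1})$ becomes $\mathcal O(x^{-2k-1})$. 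This gives \eqref{Pade-definition-qkc}, so $I_{k,0}\subset J_k$. Since $I_{k,0}$ is connected and contains $0$, it follows that $I_{k,0}\subset J_{k,0}$.

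For the reverse inclusion we run the same steps backwards. Take $c\in J_{k,0}$. The zeros of $q_{k,c}$ and $\gamma_{k,c}$ lie in $(-1,1)$, so we can write them as $\cos\psi_j$ and $\cos\varphi_j$ with angles in $(0,\pi)$. Lemma \ref{positive-weight} says the largest zero belongs to $\gamma_{k,c}$, so strict interlacing becomes exactly the ordering \eqref{Phi-interlace} for $k$ even. Defining $D_{2k}$ by \eqref{Dn-def}, the Padé relation gives $D_{2k}(z)=F_c(z)+\mathcal O(z^{-2k-1})$ at infinity. The relations $D_{2k}(1/z)=-D_{2k}(z)$ and $F_c(1/z)=F_c(z)$ then transfer this expansion to the origin and give \eqref{bn-fixing}.

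I expect the main obstacle to be the sign bookkeeping in this last step. Both the transfer from infinity to $0$ and the branch of the square root introduce signs, and they must combine to $+$ rather than produce $-F_c$. I would check this at $c=0$ using the explicit form of $D_{2k}$ from the proof of Corollary \ref{Cor2}, and then argue by continuity in $c$.

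A second point is that Corollary \ref{Cor2} presupposes angles that depend continuously on $c$. On $J_{k,0}$ the Padé denominators have simple zeros. Because the approximants are non-degenerate (they have exact degree $k$), the Padé denominator is unique and is given by a determinant whose entries are Laurent coefficients that depend analytically on $c$. The simple zeros therefore move continuously. Thus the construction above produces continuous angles on $J_{k,0}$, and $J_{k,0}\subset I_k$ is connected and contains $0$, so $J_{k,0}\subset I_{k,0}$.
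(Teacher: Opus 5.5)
Your proposal follows the paper's proof: the Joukowski substitution, the identity $D_{2k}(z)=\frac{1+z}{1-z}\,\gamma_{k,c}(x)/q_{k,c}(x)$, Lemma~\ref{positive-weight} to put the largest zero on $\gamma_{k,c}$, and Corollary~\ref{Cor2} together with connectedness. Your continuity argument for the angles, via the nonsingular Hankel determinant, makes explicit what the paper leaves implicit; note that nonsingularity rests on $\gamma_{k,c}$ and $q_{k,c}$ being coprime (by strict interlacing), not only on their having exact degree $k$.

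The one loose end is the sign you flag, and it is settled by exact algebra rather than by continuity. At $z=\infty$ one actually has $D_{2k}(z)=-F_c(z)+\mathcal{O}(z^{-2k-1})$; the unsigned statement at the start of Section~\ref{Pade-section}, which you reused, is a slip. This extra minus is compensated by $(1-z)/(1+z)=-\sqrt{(x-1)/(x+1)}$ for $|z|>1$. The paper avoids the issue altogether by working in $|z|<1$, where $x\to\infty$ corresponds to $z\to 0$ and $\sqrt{(x-1)/(x+1)}=(1-z)/(1+z)$ exactly, so \eqref{bn-fixing} comes out directly with no transfer through $z\mapsto 1/z$.
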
 

\begin{proof}  Recall $k$ even and $c\in J_{k,0}$.  With $x = \frac12(z+z^{-1})$ and $x\notin[-1,1]$ a direct calculation gives $x^2-1 = \frac14(z-z^{-1})^2$ and
\[
\sqrt{\frac{x-1}{x+1}} = \sqrt{\frac{x^2-1}{(x+1)^2}} =\sqrt{\frac{(z-1)^2}{(z+1)^2}}
\]

The right-most expression simplifies to $(1-z)/(1+z)$ in $|z|<1$ (and to $(z-1)/(z+1)$ in $|z|>1$). We obtain
\begin{align*}
 \frac{(1+z)\gamma_{k,c}(\frac12(z+z^{-1}))}{(1-z) q_{k,c}(\frac12(z+z^{-1}))} &= \exp\left(-\frac{cz}{1+z^2}\right) +\mc{O}(z^{2k+1}) \qquad (|z|<1) 
\end{align*}

We next use the sign change pattern. The interlacing assumption and the fact that the zeros are in $(-1,1)$ and that the largest zero belongs to $\gamma_{k,c}$ implies after multiplication by $z^k$ in numerator and denominator that they  satisfy the assumptions of Corollary \ref{Cor2}, hence $c\in I_{k,0}$.  The reverse argument is analogous.
\end{proof}

\begin{lemma}\label{Pade-basics}  The zeros of $q_{k,c}$ are analytic functions of $c$ on $J_{k,0}$
\end{lemma}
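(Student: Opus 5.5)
The plan is to realize $q_{k,c}$ as the solution of a linear system whose coefficients are entire in $c$, show that the system is nondegenerate on $J_{k,0}$, and then pass from coefficients to zeros with the implicit function theorem.

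\emph{Step 1: a linear system with entire coefficients.} Write $q_{k,c}(x)=x^k+\sum_{j<k} a_j x^j$. The Pad\'e condition \eqref{Pade-definition-qkc} is equivalent to the non-hermitian orthogonality
\[
\mc{L}_c[q_{k,c}\, y^j]=0, \qquad j=0,\hdots,k-1.
\]
This is a linear system for $(a_0,\hdots,a_{k-1})$ with matrix the Hankel matrix $\big(m_{i+j}(c)\big)_{i,j=0}^{k-1}$ of moments $m_r(c)=\mc{L}_c[y^r]$. Because $\Gamma_R$ is fixed and $W_c(y)$ is entire in $c$ uniformly for $y\in\Gamma_R$, each moment $m_r(c)$ is an entire function of $c$. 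Concretely, it is a finite combination of Laurent coefficients of $2iW_c$ at infinity, and these are polynomials in $c$.

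\emph{Step 2: nondegeneracy on $J_{k,0}$.} For $c\in J_{k,0}$, Lemma \ref{positive-weight} shows that $\mc{L}_c$ restricted to $\mc{P}_{2k-1}$ is integration against the positive measure $\nu_c$, which has $k$ distinct atoms. The Hankel determinant is therefore the Gram determinant of $1,\hdots,y^{k-1}$ in $L^2(\nu_c)$. These monomials are linearly independent on a $k$-point set, so the determinant is strictly positive. Cramer's rule then expresses the $a_j(c)$ as real-analytic functions, indeed ratios of entire functions, in a neighborhood of each point of $J_{k,0}$. By uniqueness of the solution of the linear system, these coincide with the coefficients of $q_{k,c}$.

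\emph{Step 3: from coefficients to zeros.} The zeros $x_\ell(c)$ of $q_{k,c}$ are simple by the definition of $J_k$, so $q_{k,c}'(x_\ell(c))\neq 0$. The map $(c,x)\mapsto q_{k,c}(x)$ is analytic in both variables, and the analytic implicit function theorem gives, near each $c_0\in J_{k,0}$, an analytic branch $x_\ell(c)$ with $q_{k,c}(x_\ell(c))=0$ and $x_\ell(c_0)$ equal to the $\ell$-th zero. The zeros stay simple, real and ordered throughout $J_{k,0}$, so these local branches patch together with consistent labelling $x_1(c)<\hdots<x_k(c)$, and each $x_\ell$ is analytic on all of $J_{k,0}$.

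The only step that needs care is Step 2, the nonvanishing of the Hankel determinant. This is exactly where positivity of $\nu_c$ from Lemma \ref{positive-weight} enters; without it, a non-hermitian functional could have a degenerate Hankel determinant and the Pad\'e denominator could fail to be unique. The same analyticity argument applies to $\gamma_{k,c}$, whose coefficients are determined linearly from those of $q_{k,c}$ and the Laurent coefficients of $2iW_c$.
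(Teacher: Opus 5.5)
Your proposal is correct and follows essentially the same route as the paper: you use the Hankel system built from the moments $\mc{L}_c[y^{i+j}]$, which are polynomial in $c$, get nonsingularity from the positive measure $\nu_c$ of Lemma~\ref{positive-weight}, then apply Cramer's rule and the implicit function theorem using simplicity of the zeros. You also spell out two points the paper leaves implicit: why the Hankel matrix is nonsingular (it is the Gram matrix of $1,\ldots,y^{k-1}$ in $L^2(\nu_c)$ with $k$ positive atoms), and how the local analytic branches patch together into globally labelled zeros.
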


\begin{proof} Let $k$ be even. Expand for $|x|>1$
\[
2iW_c(x)=\sum_{m=0}^\infty w_m(c) x^{-m}
\]
and observe that $w_m$ are polynomials in $c$ that are real-valued for real $c$.  Set $\rho_{i+1}(c) = \mc{L}_c[x^i] = -\pi w_{i+1}(c)$. Since $\mc{L}_c[x^j q_{k,c}]=0$ for $j<k$ and $q_{k,c}$ monic, the system
\[
\sum_{i=0}^{k-1} b_i \rho_{i+j+1}(c) =- \rho_{k+j+1}(c)
\]
has a solution for $0\le j\le k-1$. Lemma \ref{positive-weight} implies that $H_k(c) = (\rho_{i+j+1})_{0\le i,j\le k-1}$ is non-singular, hence the unique $b_i$ are  the coefficients of $q_{k,c}$. Since $\gamma_{k,c}$ is the polynomial part of $q_{k,c}(2iW_c)$, its coefficients are polynomial in $b_i$ and $w_m(c)$. 

Since $\rho_m$ and $\det H_k$ are entire in $c$ and $\det H_k$ is nonzero on a neighborhood of $c$, the coefficients of both polynomial are holomorphic in $c$ by Cramer's rule. The implicit function theorem and assumption of simplicity of the zeros implies that the zeros are analytic in $c$.
\end{proof}

We note that $\sup J_{k,0}<\infty$ since $C_{2k-2}<\infty$. There is also a self-contained proof that the supremum is finite;  for large $c$, the zeros of the Pad\'e approximation are not in $(-1,1)$ since the influence from the algebraic term diminishes. The approximations converge to Pad\'e approximations of $e^{-c/(2x)}$ and some of the zeros become either large or non-real.

Since we need the Pad\'e approximations for $j\le k$ for the ladder operators, we define $P_{j,c}$ and monic $Q_{j,c}$ by
\begin{align}\label{Pade-definition}
Q_{j,c}(y) (2i W_c(y)) - P_{j,c}(y) = \mc{O}(y^{-j-1})
\end{align}

We observe  
\begin{align}\label{PQ-relation}
P_{j,c}(x) = (-1)^j Q_{j,c}(-x)
\end{align}
 and note that for even $k$, we have  $Q_{k,c} = q_{k,c}$.  For $f_c(y)$, we write $\dot{f}$ for the partial with respect to the parameter $c$, and $f'$ for the partial with respect to $y$.

 Define 
\[
h_j(c) = \mc{L}_c[Q_{j,c}^2]
\]

\begin{lemma}\label{lemma4} Let $k$ be even and $0\le j\le k-1$.
\begin{enumerate}[label=(\roman*)]
\item\label{one} The functions $h_j$ are analytic and positive on $J_{k,0}$.
\item\label{two} For $c\in J_{k,0}$, the polynomials $Q_{j,c}$ and $P_{j,c}$ have real coefficients.
\item\label{three} $Q_{k,c}(0)\neq 0$ for $c\in J_{k,0}$.
\end{enumerate}
\end{lemma}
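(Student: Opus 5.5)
The proof rests on one observation. By Lemma \ref{positive-weight}, for $c\in J_{k,0}$ the functional $\mc{L}_c$ restricted to $\mc{P}_{2k-1}$ is integration against the positive discrete measure $\nu_c$, which has $k$ distinct mass points $x_\ell(c)\in(-1,1)$. I will derive all three claims from this quasi-definite, in fact positive-definite, structure.

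\emph{Positive definiteness and real coefficients.} The Hankel form $(p,q)\mapsto \mc{L}_c[pq]$ on $\mc{P}_{k-1}$ equals $\int pq\,d\nu_c$. Since $\nu_c$ has $k$ distinct support points and positive weights, this form is positive definite. It follows that every leading Hankel minor $\det(\rho_{i+l+1}(c))_{0\le i,l\le j-1}$, for $j\le k$, is positive. Hence for each $j\le k$ the linear system for the coefficients of the monic $Q_{j,c}$ (the analogue of the one in Lemma \ref{Pade-basics}, with orthogonality $\mc{L}_c[Q_{j,c}y^i]=0$ for $i<j$) has a unique solution. That orthogonality is equivalent to \eqref{Pade-definition} by the residue computation in Lemma \ref{positive-weight}.

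The moments $\rho_m(c)=-\pi w_m(c)$ are real, so by Cramer's rule $Q_{j,c}$ has real coefficients. So does $P_{j,c}$, which is the polynomial part of $Q_{j,c}\,(2iW_c)$, whose coefficients $w_m$ are real; alternatively, use \eqref{PQ-relation}. This gives \ref{two}.

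\emph{Analyticity and positivity of $h_j$.} For $j\le k-1$ we have $\deg Q_{j,c}^2\le 2k-2$, so
\[
h_j(c)=\int Q_{j,c}^2\,d\nu_c>0 .
\]
This is strictly positive because a nonzero polynomial of degree $j<k$ cannot vanish at all $k$ mass points. Analyticity follows as in Lemma \ref{Pade-basics}: the coefficients of $Q_{j,c}$ are rational in the entire functions $\rho_m(c)$, with nonvanishing Hankel determinant denominators on $J_{k,0}$. Equivalently, $h_j=\Delta_{j+1}/\Delta_j$ is a ratio of Hankel determinants. This gives \ref{one}.

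\emph{The statement $Q_{k,c}(0)\ne0$.} This is the only step that needs more than positivity, because $\nu_c$ alone does not prevent a zero at $0$. Here I will use the interlacing built into $J_{k,0}$. For even $k$, \eqref{PQ-relation} gives $P_{k,c}(x)=Q_{k,c}(-x)$, so $\gamma_{k,c}(x)=q_{k,c}(-x)$ and the zeros of $\gamma_{k,c}$ are the negatives of the zeros of $q_{k,c}$.

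Suppose $q_{k,c}(0)=0$. Then $\gamma_{k,c}(0)=0$ as well, so $q_{k,c}$ and $\gamma_{k,c}$ share the zero $0$. This contradicts the strict interlacing of simple zeros required in the definition of $J_k$. Hence $Q_{k,c}(0)=q_{k,c}(0)\ne0$, which is \ref{three}.

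The point to check carefully is that \eqref{PQ-relation} holds with exactly this sign for even $k$, so that the zero sets are reflections of each other. It follows from $W_c(-x)$ being related to $1/W_c(x)$ up to the factor structure, since $\sqrt{(x-1)/(x+1)}\,e^{-c/(2x)}$ is inverted under $x\mapsto -x$. Combined with uniqueness of the Padé approximant, this yields the reflection.
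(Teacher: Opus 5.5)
Your proposal is correct and follows the paper's argument. $Q_{j,c}$ is the monic orthogonal polynomial for the positive discrete measure $\nu_c$ of Lemma \ref{positive-weight}, which gives positivity of $h_j$ and real coefficients; analyticity comes from the nonvanishing Hankel determinants as in Lemma \ref{Pade-basics}; and part (iii) follows from \eqref{PQ-relation}, which makes the zeros of $\gamma_{k,c}$ the reflections of those of $q_{k,c}$, together with strict interlacing. You only add detail the paper omits, such as the Hankel minors and $h_j=\Delta_{j+1}/\Delta_j$, and your concern about the sign in \eqref{PQ-relation} is unnecessary, since the zero sets are reflections of each other for either sign.
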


\begin{proof}  The $Q_{j,c}$ is the monic orthogonal polynomial of degree $j$ for $\nu_c$. This implies positivity of $h_j$, and with \eqref{PQ-relation}    that $Q_{j,c}$ and $P_{j,c}$ have real coefficients. Analyticity follows  with an argument analogous to Lemma \ref{Pade-basics}.   Strict interlacing and \eqref{PQ-relation}  give \ref{three}.
\end{proof}
 
As a consequence, the Christoffel kernel $K_{k-1, c}$ given by
\[
K_{k-1,c}(x,y) =\sum_{j=0}^{k-1}  h_j(c)^{-1} Q_{j,c}(x) Q_{j,c}(y)
\]
is well defined and satisfies the Christoffel-Darboux identity
\[
K_{k-1,c}(x,y) = \frac{1}{h_{k-1}(c)} \frac{Q_{k,c}(x) Q_{k-1,c}(y) - Q_{k-1,c}(x) Q_{k,c}(y)}{x-y}
\]

\begin{lemma} Let $k$ be even, $c\in J_{k,0}$, and $m\ge 1$. Then
\begin{align*}
\mc{L}_c[Q_{k,c}/y^m] & = -(-1)^{m-1}\pi  \frac{Q_{k,c}^{(m-1)}(0) }{(m-1)!}\\
\mc{L}_c[Q_{k,c}^2/y^{2m}]&=0\\
\mc{L}_c[Q_{k,c}^2/(y^2-1)]&=0\\
\mc{L}_c[yQ_{k,c}^2/(y^2-1)]&=-\pi  Q_{k,c}(1) Q_{k,c}(-1)
\end{align*}
\end{lemma}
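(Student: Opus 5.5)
The plan is to trade the weight $W_c$ for the Pad\'e numerator. By \eqref{Pade-definition} with $j=k$, I will write
\[
Q_{k,c}(y)\,W_c(y) = \frac{1}{2i}\bigl(P_{k,c}(y) + E_{k,c}(y)\bigr), \qquad E_{k,c}(y) = \mc{O}(y^{-k-1}),
\]
where $E_{k,c}=Q_{k,c}(2iW_c)-P_{k,c}$ is analytic in $\C\backslash[-1,1]$, including at $\infty$.

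For each of the four functionals, the integrand is $Q_{k,c}W_c\, g$ with $g\in\{Q_{k,c}/y^{m},\ Q_{k,c}^2/y^{2m}\cdot Q_{k,c}^{-1},\dots\}$. More precisely, the extra factor multiplying $Q_{k,c}W_c$ is one of
\[
y^{-m},\qquad Q_{k,c}\,y^{-2m},\qquad \frac{Q_{k,c}}{y^2-1},\qquad \frac{y\,Q_{k,c}}{y^2-1}.
\]
This factor is a rational function whose poles lie in $\{0,\pm1\}$, inside $\Gamma_R$.

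The first step is to show that the $E_{k,c}$ contribution vanishes. The product of $E_{k,c}$ with each of these factors is analytic in $|y|>1$. Its decay at infinity is $\mc{O}(y^{-k-1-m})$ in the first case. In the remaining three cases it is $\mc{O}(y^{-1-2m})$, $\mc{O}(y^{-3})$ and $\mc{O}(y^{-2})$, using $\deg Q_{k,c}=k$. In every case the decay is at least $\mc{O}(y^{-2})$. Hence the integral over $\Gamma_R$ is independent of $R>1$ and tends to $0$ as $R\to\infty$, so it vanishes.

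What remains is
\[
\frac{1}{2i}\int_{\Gamma_R} P_{k,c}(y)\,(\text{factor})\,dy .
\]
This is the integral of a rational function over a clockwise circle enclosing all its poles, so it equals $\frac{1}{2i}(-2\pi i)\sum\mathrm{Res} = -\pi\sum\mathrm{Res}$.

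Next I use \eqref{PQ-relation} with $k$ even, which gives $P_{k,c}(y)=Q_{k,c}(-y)$. The four residue computations then go as follows.

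\begin{enumerate}[label=(\roman*)]
\item The residue of $P_{k,c}(y)y^{-m}$ at $0$ is $P_{k,c}^{(m-1)}(0)/(m-1)! = (-1)^{m-1}Q_{k,c}^{(m-1)}(0)/(m-1)!$. This gives the first identity.
\item $P_{k,c}(y)Q_{k,c}(y) = Q_{k,c}(-y)Q_{k,c}(y)$ is an even polynomial. The residue of this polynomial times $y^{-2m}$ at $0$ is its coefficient of $y^{2m-1}$, which is zero.
\item For $Q_{k,c}(-y)Q_{k,c}(y)/(y^2-1)$, the residues at $y=1$ and $y=-1$ are $\pm\frac12 Q_{k,c}(1)Q_{k,c}(-1)$. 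They cancel.
\item With the extra factor $y$, the two residues at $\pm1$ are both $\frac12 Q_{k,c}(1)Q_{k,c}(-1)$. Their sum gives $-\pi Q_{k,c}(1)Q_{k,c}(-1)$.
\end{enumerate}

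The only point needing care is the first step. The functional is defined on a circle around the essential singularity at $0$ and the branch cut $[-1,1]$ of $W_c$. One therefore cannot shrink the contour. Instead the argument must push the contour outward, to infinity, and only on the error term $E_{k,c}$.

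This is legitimate because $E_{k,c}$ is analytic in the exterior region, and the order of the Pad\'e contact, $\mc{O}(y^{-k-1})$, beats the growth of the extra factor. The key count is the second case: a factor $Q_{k,c}$ of degree $k$ combined with $y^{-2m}$ requires $m\ge1$, which is exactly the hypothesis in the statement. For the rational part, all singularities are poles strictly inside $\Gamma_R$, since $R>1$. The residue theorem therefore applies directly, with the sign fixed by the clockwise orientation.
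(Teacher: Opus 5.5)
Your proof is correct and follows the paper's route: you replace $Q_{k,c}W_c$ by $\frac{1}{2i}$ times the Pad\'e numerator plus an $\mc{O}(y^{-k-1})$ remainder whose contribution on $\Gamma_R$ vanishes, and then evaluate the rational part by residues using $P_{k,c}(y)=Q_{k,c}(-y)$. The only difference is cosmetic: the paper gets the last two identities by expanding $(y^2-1)^{-1}$ in powers of $y^{-2}$ and summing coefficients of $-\pi Q_{k,c}(y)Q_{k,c}(-y)$, whereas you take residues at $\pm1$ directly, and you also track the factor $1/(2i)$ and the remainder's decay more explicitly than the paper does.
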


\begin{proof}  Replacing $Q_{k,c}(y) W_c(y) = P_{k,c}(y) + \mc{O}(y^{-k-1})$ in the integral, evaluating the integral of the first term using the residue theorem and minding the orientation gives the first identity.  Likewise, for $\ell\ge 1$
\begin{align*}
\mc{L}_c[Q_{k,c}^2/y^\ell] &= \int_{|y|=R} \frac{Q_{k,c}(y) (P_{k,c}(y) + \mc{O}(|y|^{-k-1}))}{y^\ell} dy
\end{align*}

Hence the integral is the $(\ell-1)$st coefficient of the product $-\pi  Q_{k,c}(y) Q_{k,c}(-y)$, and for odd $\ell-1$ the coefficient is zero. Expanding $(y^2-1)^{-1}$ in powers of $y^{-2}$ gives the other claims. 
\end{proof}

\begin{lemma} Let $k$ be even and $c\in J_{k,0}$. Then
\begin{align}\label{hdot-rep}
\dot{h}_k(c) = \frac{\pi}{2}   Q_{k,c}(0)^2
\end{align}

Since $h_k(0)>0$, we have $h_{k}> 0$ on $J_{k,0}\cap [0,\infty)$.
\end{lemma}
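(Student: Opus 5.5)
The plan is to differentiate $h_k(c)=\mc{L}_c[Q_{k,c}^2]=\int_{\Gamma_R}Q_{k,c}(y)^2W_c(y)\,dy$ directly in $c$ and then use orthogonality together with the residue identities of the preceding lemma. By Lemma \ref{Pade-basics}, the coefficients of $Q_{k,c}=q_{k,c}$ are analytic in $c$ on $J_{k,0}$. The weight $W_c$ is entire in $c$, uniformly for $y$ on the fixed circle $\Gamma_R$. Hence differentiation under the integral sign is justified, and
\[
\dot h_k(c)=2\,\mc{L}_c[Q_{k,c}\dot Q_{k,c}]+\int_{\Gamma_R}Q_{k,c}(y)^2\,\dot W_c(y)\,dy .
\]

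For the first term: $Q_{k,c}$ is monic of degree $k$, so $\dot Q_{k,c}\in\mc{P}_{k-1}$. The Pad\'e relation \eqref{Pade-definition} gives $\mc{L}_c[Q_{k,c}y^j]=0$ for $0\le j\le k-1$. Indeed, $Q_{k,c}\,2iW_c-P_{k,c}$ is analytic outside $\Gamma_R$ and is $\mc{O}(y^{-k-1})$ there. So after multiplying by $y^j$, the integrand is $\mc{O}(y^{-2})$ at infinity, while the integral of $y^jP_{k,c}$ over the circle vanishes. Therefore the first term is zero.

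For the second term, $\dot W_c(y)=-\frac{1}{2y}W_c(y)$, so
\[
\dot h_k(c)=-\tfrac12\,\mc{L}_c[Q_{k,c}^2/y].
\]
To evaluate this, I write
\[
\frac{Q_{k,c}(y)^2}{y}=Q_{k,c}(y)\,\frac{Q_{k,c}(y)-Q_{k,c}(0)}{y}+Q_{k,c}(0)\,\frac{Q_{k,c}(y)}{y}.
\]
The first summand is $Q_{k,c}$ times a polynomial of degree $k-1$, so it is annihilated by orthogonality. For the second summand, the first identity of the preceding lemma with $m=1$ gives $\mc{L}_c[Q_{k,c}/y]=-\pi Q_{k,c}(0)$. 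Alternatively, this is the $\ell=1$ case of the coefficient formula for $\mc{L}_c[Q_{k,c}^2/y^\ell]$ in that proof, using $P_{k,c}(y)=Q_{k,c}(-y)$ for even $k$. Either way,
\[
\mc{L}_c[Q_{k,c}^2/y]=-\pi Q_{k,c}(0)^2,
\qquad\text{hence}\qquad
\dot h_k(c)=\tfrac{\pi}{2}Q_{k,c}(0)^2 .
\]
Lemma \ref{lemma4}\ref{two} shows that $Q_{k,c}(0)$ is real, so the right-hand side is nonnegative.

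For the positivity claim: $h_k(0)>0$, since for $c=0$ the functional is positive and $Q_{k,0}$ is the orthogonal polynomial of the Jacobi-type weight. Because $\dot h_k\ge0$ on $J_{k,0}$, $h_k$ is nondecreasing on the interval $J_{k,0}\cap[0,\infty)$, so $h_k(c)\ge h_k(0)>0$ there.

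The main point needing care is the bookkeeping of orientation and the factor $1/(2i)$ in the residue evaluation of $\mc{L}_c[Q_{k,c}/y]$. For this I rely on the preceding lemma rather than recomputing. I also need to ensure that the orthogonality $\mc{L}_c[Q_{k,c}y^j]=0$ is used only for $j\le k-1$; this holds because $Q_{k,c}$ is monic, so $\dot Q_{k,c}$ has degree at most $k-1$.
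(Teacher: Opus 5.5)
Your proof is correct and follows the paper's argument, with more detail written out. The $\dot Q_{k,c}$ term vanishes by orthogonality because $Q_{k,c}$ is monic, and $\dot W_c=-W_c/(2y)$ gives $\dot h_k=-\frac12\mc{L}_c[Q_{k,c}^2/y]$. Splitting off $Q_{k,c}(0)$ then reduces this to $Q_{k,c}(0)\,\mc{L}_c[Q_{k,c}/y]=-\pi Q_{k,c}(0)^2$ via the preceding lemma with $m=1$.

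One small citation nit: $Q_{k,c}(0)$ is real directly because $Q_{k,c}=q_{k,c}$ is monic with all zeros in $(-1,1)$ (definition of $J_k$). Lemma~\ref{lemma4}\ref{two} is stated only for $j\le k-1$, so it does not cover this case.
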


\begin{proof}
Since $Q_{k,c}$ is monic, $\dot{h}_k = -\frac12 \mc{L}_c[Q_{k,c}^2/y]$, orthogonality with the previous lemma gives $\mc{L}_c[Q_{k,c}^2/y] = Q_{k,c}(0) \mc{L}_c[Q_{k,c}/y]$.
\end{proof}

 Differentiation of $Q_{k,c}(x_\ell(c)) =0$  in $c$ gives
\begin{align}\label{xdot-quotient}
\dot{x}_\ell(c) = - \frac{\dot{Q}_{k,c}(x_\ell(c))}{Q_{k,c}'(x_\ell(c))}
\end{align}
and we aim to express the right side in terms of the zeros of $Q_{k,c}$.  For the numerator, from $\mc{L}_c[\dot{Q}_{k,c} Q_{j,c}] = \frac12 Q_{j,c}(0) \mc{L}_c[Q_{k,c}/y]$ and $k$ even, 
\begin{align}\label{Qdot-rep}
\begin{split}
\dot{Q}_{k,c}(y) &= -\frac{\pi}{2}  Q_{k,c}(0)\sum_{j=0}^{k-1} h_j^{-1} Q_{j,c}(0) Q_{j,c}(y) \\
&= -\frac{\pi}{2}  Q_{k,c}(0) K_{k-1,c}(0,y)
\end{split}
\end{align}

For the denominator of $\dot{x}_\ell$ we need to set up a Chen-Ismail type ladder operator \cite{CI97, Ism05} since this will allow replacing $Q_{k,c}'$ by $Q_{k-1,c}$ at zeros of $Q_{k,c}$.

Define $V_c(y) = -\log W_c$ and observe
\[
V_c'(y) = -\frac{1}{y^2-1}-\frac{c}{2y^2}
\]

\begin{lemma} For $c\in J_{k,0}$ and $j\le k$,
\begin{align}\label{Vprimeqjqk}
\mc{L}_c[Q_{k,c}'Q_{j,c}] = \mc{L}_c[V_c'Q_{k,c} Q_{j,c}]
\end{align}
\end{lemma}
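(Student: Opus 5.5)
This identity is integration by parts on the contour $\Gamma_R$. Since $\Gamma_R$ is closed and $Q_{k,c}Q_{j,c}W_c$ is analytic on a neighborhood of $\Gamma_R$, we have
\[
0=\int_{\Gamma_R} \bigl(Q_{k,c}Q_{j,c}W_c\bigr)'(y)\,dy .
\]
Because $V_c=-\log W_c$, we have $W_c'=-V_c'W_c$ away from $[-1,1]\cup\{0\}$. This derivative is single-valued, since $W_c$ is single-valued and analytic in $\C\backslash[-1,1]$; the formula $V_c'(y)=-1/(y^2-1)-c/(2y^2)$ is the logarithmic derivative of $\sqrt{(y-1)/(y+1)}\,e^{-c/(2y)}$ with a sign change. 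Expanding the derivative then gives
\[
\mc{L}_c[Q_{k,c}'Q_{j,c}] + \mc{L}_c[Q_{k,c}Q_{j,c}'] = \mc{L}_c[V_c'Q_{k,c}Q_{j,c}] .
\]
Here $V_c'Q_{k,c}Q_{j,c}$ is a rational function with poles only at $0,\pm1$, which lie inside $\Gamma_R$. The functional $\mc{L}_c$ is defined for any such function analytic near $\Gamma_R$ by the same contour integral. This is the sense used in the previous lemma for $Q_{k,c}^2/y^m$ and $Q_{k,c}^2/(y^2-1)$.

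It then remains to show $\mc{L}_c[Q_{k,c}Q_{j,c}']=0$ for $j\le k$. For $j=0$ the derivative vanishes. For $1\le j\le k$, $Q_{j,c}'$ is a polynomial of degree $j-1\le k-1$. The defining Padé relation \eqref{Pade-definition} with $j=k$ gives $Q_{k,c}(2iW_c)=P_{k,c}+\mc{O}(y^{-k-1})$. Hence for any polynomial $s$ of degree $\le k-1$,
\[
\int_{\Gamma_R} s(y)Q_{k,c}(y)W_c(y)\,dy=\frac{1}{2i}\int_{\Gamma_R}\bigl(sP_{k,c}+\mc{O}(y^{-2})\bigr)\,dy .
\]
The polynomial part integrates to zero. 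The remainder is $\mc{O}(y^{-2})$ and analytic outside $\Gamma_R$, so letting $R\to\infty$ (the integral is independent of $R>1$ by Cauchy) gives zero. This is exactly the orthogonality $\mc{L}_c[q_{k,c}y^j]=0$ for $j\le k-1$ already used in Lemma \ref{positive-weight}, so that orthogonality can be cited directly.

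Combining the two displays yields \eqref{Vprimeqjqk}. No step is a real obstacle. The only care needed is to justify that the derivative of $Q_{k,c}Q_{j,c}W_c$ integrates to zero over the closed contour and that $W_c'=-V_c'W_c$ holds as single-valued analytic functions on $\Gamma_R$. Both follow because $W_c$ is analytic and single-valued in $\C\backslash[-1,1]$ minus the origin, with the branch fixed as in the definition. The hypothesis $c\in J_{k,0}$ enters only to guarantee that the $Q_{j,c}$ for $j\le k$ exist, with $Q_{k,c}=q_{k,c}$, as in Lemma \ref{lemma4}.
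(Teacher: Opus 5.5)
Your proposal is correct and follows the paper's argument: the integral of $(Q_{k,c}Q_{j,c}W_c)'$ over the closed contour vanishes, you expand it with $W_c'=-V_c'W_c$, and the $Q_{k,c}Q_{j,c}'$ term drops out because $Q_{k,c}$ is orthogonal to $\mc{P}_{k-1}$. You also supply details the paper leaves implicit, namely that $W_c$ is single-valued off $[-1,1]$ and that orthogonality follows from the Padé remainder being $\mc{O}(y^{-2})$.
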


\begin{proof} Since $W_c' = -V_c'W_c$, 
\[
(Q_{k,c} Q_{j,c} W_c)' = (Q_{k,c}'Q_{j,c} + Q_{k,c} Q_{j,c}' - V_c'Q_{k,c} Q_{j,c}) W_c
\]

When integrating on the circle, the term on the left and the middle term on the right are zero.
\end{proof}

Define  $\mc{V}_c$ by
\[
\mc{V}_c(x,y) = \frac{V_c'(x) - V_c'(y)}{x-y}
\]

\begin{lemma} For $k$ even and $c\in J_{k,0}$,
\begin{align}\label{Qprime-numerator}
\begin{split}
 \mc{L}_c[\mc{V}_c(x,.) Q_{k,c}^2]  & = \frac{-(2k+1)h_k(c) x^2 +   c \dot{h}_k(c)}{x^2(x^2-1)}
\end{split}
\end{align}
\end{lemma}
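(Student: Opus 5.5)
The plan is to compute $\mc{V}_c(x,y)$ explicitly, apply $\mc{L}_c$ term by term using the four identities of the lemma on $\mc{L}_c[Q_{k,c}/y^m]$ and its companions together with \eqref{hdot-rep}, and then eliminate the boundary quantity $Q_{k,c}(1)Q_{k,c}(-1)$. That last elimination is done with a second integration by parts of the same kind as in the proof of \eqref{Vprimeqjqk}.

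\emph{Step 1: divided difference.} From $V_c'(y) = -\frac{1}{y^2-1}-\frac{c}{2y^2}$ we get
\[
\mc{V}_c(x,y) = \frac{x+y}{(x^2-1)(y^2-1)} + \frac{c(x+y)}{2x^2y^2}.
\]
We apply $\mc{L}_c[\,\cdot\,Q_{k,c}^2]$ in $y$. The previous lemma gives $\mc{L}_c[Q_{k,c}^2/(y^2-1)]=0$, $\mc{L}_c[Q_{k,c}^2/y^2]=0$ and $\mc{L}_c[yQ_{k,c}^2/(y^2-1)] = -\pi Q_{k,c}(1)Q_{k,c}(-1)$. The proof of \eqref{hdot-rep} gives $\mc{L}_c[Q_{k,c}^2/y] = -2\dot h_k$. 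Hence
\[
\mc{L}_c[\mc{V}_c(x,\cdot)Q_{k,c}^2] = -\frac{\pi Q_{k,c}(1)Q_{k,c}(-1)}{x^2-1} - \frac{c\dot h_k(c)}{x^2}.
\]
Writing $\frac{c\dot h_k}{x^2(x^2-1)} = \frac{c\dot h_k}{x^2-1}-\frac{c\dot h_k}{x^2}$, this agrees with \eqref{Qprime-numerator} precisely when
\[
\pi Q_{k,c}(1)Q_{k,c}(-1) = (2k+1)h_k(c) - c\dot h_k(c).
\]

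\emph{Step 2: the boundary identity.} To prove this identity, integrate the exact derivative $(yQ_{k,c}^2W_c)'$ over $\Gamma_R$. Using $W_c'=-V_c'W_c$, this gives
\[
0 = \mc{L}_c[Q_{k,c}^2] + 2\mc{L}_c[yQ_{k,c}Q_{k,c}'] - \mc{L}_c[yV_c'Q_{k,c}^2].
\]
Since $yQ_{k,c}' = kQ_{k,c} + (\text{degree}<k)$, orthogonality gives $\mc{L}_c[yQ_{k,c}Q_{k,c}'] = kh_k$. Therefore $\mc{L}_c[yV_c'Q_{k,c}^2] = (2k+1)h_k$.

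On the other hand, $yV_c'(y) = -\frac{y}{y^2-1} - \frac{c}{2y}$. So, by the same identities as in Step 1,
\[
\mc{L}_c[yV_c'Q_{k,c}^2] = \pi Q_{k,c}(1)Q_{k,c}(-1) + c\dot h_k.
\]
Comparing the two expressions yields the required identity.

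\emph{Main obstacle.} The only delicate point is justifying that the exact derivative integrates to zero and that orthogonality applies to $yQ_{k,c}'Q_{k,c}$. Both hold because everything is integrated on the closed circle $\Gamma_R$ with $R>1$, where $W_c$ is analytic and single-valued, exactly as in \eqref{Vprimeqjqk}. One also has to keep the signs from the clockwise orientation consistent; the previous lemma already encodes these.
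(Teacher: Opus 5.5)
Your proof is correct and uses the same ingredients as the paper: the explicit divided difference $\mc{V}_c$, the evaluations from the preceding lemma together with $\mc{L}_c[Q_{k,c}^2/y]=-2\dot h_k$, and the integration by parts of $(yQ_{k,c}^2W_c)'$ giving $\mc{L}_c[yV_c'Q_{k,c}^2]=(2k+1)h_k$. The only difference is how the boundary identity is obtained. The paper identifies $-\pi Q_{k,c}(1)Q_{k,c}(-1)-c\dot h_k$ with $-(2k+1)h_k$ by expanding $\mc{V}_c(x,y)$ in powers of $x^{-1}$ and matching the $x^{-2}$ terms. You instead substitute $yV_c'(y)=-\frac{y}{y^2-1}-\frac{c}{2y}$ directly, which is exactly the computation the paper carries out afterwards for the second expression of $\beta_k$.
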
 

\begin{proof} We suppress the subscript $c$. A direct calculation gives
\[
\mc{V}(x,y) = (x+y)\left( \frac{1}{(x^2-1)(y^2-1)} +\frac{c}{2x^2y^2}\right)
\]

It follows with the evaluations above that
\[
 \mc{L}_c[\mc{V}(x,y) Q_k(y)^2] = \frac{\left(  \mc{L}_c[yQ_k^2/(y^2-1)]\right)}{x^2-1} +\frac{c \left( \mc{L}_c[Q_k^2/y]\right)}{2x^2} 
\]
and $ \mc{L}_c[Q_k^2/y] = -\pi Q_{k,c}(0)^2 = -2\dot{h}_k$ as in \eqref{hdot-rep}. Since
\[
\frac{A}{x^2 -1 } + \frac{B}{x^2} = \frac{A+B}{x^2} + \frac{A}{x^4(1-x^{-2})} = \frac{(A+B)x^2 - B}{x^2(x^2-1)},
\]
we expand in powers of $x^{-1}$ to evaluate $A+B$. For $y$ with $|y| = R$ 
\[
\mc{V}(x,y) = -\frac{V'(y)}{x} - \frac{y V'(y)}{x^2} + \mc{O}(x^{-3})
\]

It follows from \eqref{Vprimeqjqk} that  $\mc{L}_c[V'Q_k^2] =0$. Similarly, the identity
\[
(yQ_k^2 w)' = y(Q_k^2 w)'  +Q_k^2 w = 2y Q_kQ_k'w - yV'Q_k^2 w + Q_k^2 w
\]
and integration on $|y|=R$ with $\mc{L}_c[yQ_k' Q_k] = k h_k$ gives $\mc{L}_c[yV'Q_k^2] = (2k+1)h_k$ which implies the claim.
\end{proof}

\begin{lemma} Let $k$ be even. For $c\in J_{k,0}$
\begin{align}\label{Qprime-rep}
Q_{k,c}'(x) = -\frac{\mc{L}_c[\mc{V}_c(x,.) Q_{k,c}Q_{k-1,c}]}{h_{k-1}(c)} Q_{k,c}(x) +\frac{   \mc{L}_c[\mc{V}_c(x,.) Q_{k,c}^2] }{h_{k-1}(c)} Q_{k-1,c}(x)
\end{align}
\end{lemma}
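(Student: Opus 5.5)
We follow the standard Chen--Ismail derivation. Since $Q_{k,c}'$ has degree $k-1$, we expand it in the basis $Q_{0,c},\dots,Q_{k-1,c}$, which is orthogonal for $\mc{L}_c$ (equivalently for $\nu_c$ on $\mc{P}_{2k-1}$, Lemma \ref{positive-weight}). This basis is available because $h_j(c)>0$ for $j\le k-1$ by Lemma \ref{lemma4}. The expansion reads
\[
Q_{k,c}'(x)=\sum_{j=0}^{k-1} h_j^{-1}\mc{L}_c[Q_{k,c}'Q_{j,c}]\,Q_{j,c}(x),
\]
and by \eqref{Vprimeqjqk} each coefficient equals $\mc{L}_c[V_c'Q_{k,c}Q_{j,c}]$.

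Next we rewrite $V_c'(y)=V_c'(x)-(x-y)\mc{V}_c(x,y)$. The term $V_c'(x)\mc{L}_c[Q_{k,c}Q_{j,c}]$ vanishes for $j<k$ by orthogonality. Both $\mc{L}_c[\cdot]$ and $V_c'$ are fine here: the contour $\Gamma_R$ avoids the poles of $V_c'$ at $0,\pm1$, and $x$ is taken outside $\Gamma_R$, or the identity is treated as one of rational functions in $x$. This gives
\[
Q_{k,c}'(x)=\mc{L}_c\Big[\mc{V}_c(x,y)\,Q_{k,c}(y)\sum_{j=0}^{k-1}h_j^{-1}Q_{j,c}(y)Q_{j,c}(x)(x-y)\Big]
=\mc{L}_c\big[\mc{V}_c(x,y)Q_{k,c}(y)(x-y)K_{k-1,c}(x,y)\big].
\]
We then insert the Christoffel--Darboux identity, which is valid since the $h_j$ are nonzero:
\[
(x-y)K_{k-1,c}(x,y)=h_{k-1}^{-1}\big(Q_{k,c}(x)Q_{k-1,c}(y)-Q_{k-1,c}(x)Q_{k,c}(y)\big).
\]
Pulling the $x$-dependent factors outside $\mc{L}_c$ yields
\[
Q_{k,c}'(x)=h_{k-1}^{-1}\Big(Q_{k,c}(x)\,\mc{L}_c[\mc{V}_c(x,\cdot)Q_{k,c}Q_{k-1,c}]-Q_{k-1,c}(x)\,\mc{L}_c[\mc{V}_c(x,\cdot)Q_{k,c}^2]\Big).
\]
This is \eqref{Qprime-rep} up to the overall sign.

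The main obstacle is this sign, together with checking that the Christoffel--Darboux identity in the orientation stated actually holds for this non-hermitian functional. With the convention above, the Christoffel--Darboux identity as displayed carries the sign of the leading-coefficient ratio, and a careful recomputation may flip it. I would verify the sign by comparing leading terms as $x\to\infty$.
\begin{itemize}
\item On the left, $Q_{k,c}'(x)\sim kx^{k-1}$.
\item From the expansion $\mc{V}_c(x,y)=-V_c'(y)/x-yV_c'(y)/x^2+\dots$, the first bracket gives $\mc{L}_c[\mc{V}Q_kQ_{k-1}]=O(x^{-2})$, because $\mc{L}_c[V_c'Q_kQ_{k-1}]=\mc{L}_c[Q_k'Q_{k-1}]=kh_{k-1}$ at order $x^{-1}$.
\item Hence that first term contributes $-kx^{k-1}$ times the sign convention, and the second bracket is $O(x^{-2})$ by \eqref{Qprime-numerator}.
\end{itemize}
Matching the $x^{k-1}$ coefficients fixes the correct sign and confirms \eqref{Qprime-rep}. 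The remaining justification, that $\mc{L}_c$ commutes with the finite sums and with dependence on the parameter $x$, is immediate because all integrands are analytic on $\Gamma_R$.
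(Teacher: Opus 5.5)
Your route is the paper's route step for step: expand $Q_{k,c}'$ in $Q_{0,c},\dots,Q_{k-1,c}$, replace $\mc{L}_c[Q_{k,c}'Q_{j,c}]$ by $\mc{L}_c[V_c'Q_{k,c}Q_{j,c}]$, split $V_c'(y)=V_c'(x)-(x-y)\mc{V}_c(x,y)$, discard the $V_c'(x)$ term by orthogonality, and apply Christoffel--Darboux. The sign discrepancy you report, however, is a slip in your own computation and not a defect of the Christoffel--Darboux identity. Substituting the split gives
\[
Q_{k,c}'(x)=-\,\mc{L}_c\big[\mc{V}_c(x,y)\,Q_{k,c}(y)\,(x-y)K_{k-1,c}(x,y)\big],
\]
and your displayed formula drops this minus sign. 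If you keep it, the Christoffel--Darboux identity exactly as stated in the paper gives the claimed formula with the claimed signs. That identity needs no correction. Every $Q_{j,c}$ is monic, so there is no leading-coefficient ratio to track. Also, on $\mc{P}_{2k-1}$ the functional $\mc{L}_c$ is integration against the positive measure $\nu_c$, so the usual three-term recurrence and Christoffel--Darboux formula hold verbatim.

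As written, your argument derives the identity with the wrong overall sign and then picks the sign by matching the coefficient of $x^{k-1}$. That is not a valid proof step: a derivation containing an error is not repaired by choosing whichever sign makes the leading terms agree; you have to locate the error. The asymptotic comparison is a reasonable sanity check, but your bullet misstates it. In fact $\mc{L}_c[\mc{V}_c(x,\cdot)Q_{k,c}Q_{k-1,c}]=-k\,h_{k-1}(c)\,x^{-1}+\mc{O}(x^{-2})$, not $\mc{O}(x^{-2})$. It is exactly this $x^{-1}$ term, multiplied by $-Q_{k,c}(x)/h_{k-1}(c)$, that yields $kx^{k-1}$, while the $Q_{k-1,c}$ term is $\mc{O}(x^{k-3})$. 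Restore the minus sign and the proof is complete without any asymptotic check.
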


\begin{proof} It follows from \eqref{Vprimeqjqk} that $\mc{L}_c[Q_k' Q_j] = \mc{L}_c[ V'Q_{k} Q_{j}],$ hence
\[
Q_{k}'(x)= \sum_{j=0}^{k-1} \frac{1}{h_j} \mc{L}_c[V' Q_{k} Q_{j}] Q_{j}(x) = \mc{L}_c \left[ V' Q_k K_{k-1}(x,.)\right]
\]

Using $V'(y) = V'(x) - (x-y) \mc{V}(x,y)$, observing that $V'(x) \mc{L}_c[Q_j Q_k]=0$ and applying the Christoffel-Darboux formula to  $(x-y) K_{k-1}(x,y)$ gives the claim.
\end{proof}

\begin{lemma} Let $k$ be even, $c\in J_{k,0}$, $c\ge 0$. Each function $\zeta =\pm  x_\ell$ satisfies the differential equation
\[
\dot{\zeta} = F(\zeta, c)
\]
with 
\[
F(\zeta,c) = \frac{\beta_k(c)}{c} \frac{\zeta (\zeta^2-1)}{\zeta^2 - \beta_k(c)}
\]
and
\[
\beta_k(c) = \frac{c\dot{h}_k(c)}{(2k+1) h_k(c)} = \frac{ cQ_{k,c}(0)^2}{2Q_{k,c}(1) Q_{k,c}(-1) + cQ_{k,c}(0)^2}
\]
\end{lemma}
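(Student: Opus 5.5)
We start from \eqref{xdot-quotient}, $\dot{x}_\ell = -\dot Q_{k,c}(x_\ell)/Q_{k,c}'(x_\ell)$, and evaluate numerator and denominator at a zero $x=x_\ell$ of $Q_{k,c}$.

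For the numerator, \eqref{Qdot-rep} gives $\dot Q_{k,c}(x) = -\frac{\pi}{2}Q_{k,c}(0)K_{k-1,c}(0,x)$. At $x=x_\ell$ the Christoffel--Darboux identity simplifies, since $Q_{k,c}(x_\ell)=0$:
\[
K_{k-1,c}(0,x_\ell) = \frac{Q_{k,c}(0)Q_{k-1,c}(x_\ell)}{h_{k-1}(c)\,(0-x_\ell)}.
\]
Hence
\[
\dot Q_{k,c}(x_\ell)=\frac{\pi}{2}\frac{Q_{k,c}(0)^2 Q_{k-1,c}(x_\ell)}{h_{k-1}x_\ell}.
\]
By \eqref{hdot-rep} this equals $\dot h_k Q_{k-1,c}(x_\ell)/(h_{k-1}x_\ell)$.

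For the denominator, we use the ladder relation \eqref{Qprime-rep}. At $x=x_\ell$ the first term vanishes, so
\[
Q_{k,c}'(x_\ell)=\frac{\mc{L}_c[\mc{V}_c(x_\ell,\cdot)Q_{k,c}^2]}{h_{k-1}}\,Q_{k-1,c}(x_\ell).
\]
We then insert \eqref{Qprime-numerator}:
\[
Q_{k,c}'(x_\ell)=\frac{-(2k+1)h_kx_\ell^2+c\dot h_k}{x_\ell^2(x_\ell^2-1)}\cdot\frac{Q_{k-1,c}(x_\ell)}{h_{k-1}}.
\]
Dividing, the factors $Q_{k-1,c}(x_\ell)/h_{k-1}$ cancel. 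Here $Q_{k-1,c}(x_\ell)\ne0$ because consecutive orthogonal polynomials for the positive measure $\nu_c$ of Lemma~\ref{positive-weight} have no common zeros, and $h_{k-1}>0$ by Lemma~\ref{lemma4}. We obtain
\[
\dot x_\ell = -\frac{\dot h_k\, x_\ell(x_\ell^2-1)}{-(2k+1)h_kx_\ell^2+c\dot h_k}=\frac{\dot h_k}{(2k+1)h_k}\frac{x_\ell(x_\ell^2-1)}{x_\ell^2-\beta_k},
\]
with $\beta_k = c\dot h_k/((2k+1)h_k)$. This is $F(x_\ell,c)$, since $\dot h_k/((2k+1)h_k)=\beta_k/c$. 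For $c=0$ the expression $\beta_k/c$ is read as $\dot h_k/((2k+1)h_k)$ directly. The division by $h_k$ is legitimate for $c\ge0$ because $h_k>0$ there by the lemma following \eqref{hdot-rep}. The claim for $\zeta=-x_\ell$ follows because $F(\cdot,c)$ is odd in $\zeta$, so $\dot\zeta=-F(x_\ell,c)=F(-x_\ell,c)$.

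It remains to identify the second expression for $\beta_k$. Here we use that the two ways of computing $A+B$ in the proof of \eqref{Qprime-numerator} must agree. The coefficient $A=\mc{L}_c[yQ_k^2/(y^2-1)]=-\pi Q_{k,c}(1)Q_{k,c}(-1)$ and $B=\frac c2\mc{L}_c[Q_k^2/y]=-\frac{c\pi}{2}Q_{k,c}(0)^2$ combine to $A+B=-(2k+1)h_k$. Together with $\dot h_k=\frac\pi2Q_{k,c}(0)^2$ this gives
\[
(2k+1)h_k=\pi Q_{k,c}(1)Q_{k,c}(-1)+\tfrac{c\pi}{2}Q_{k,c}(0)^2,
\]
and substituting into $\beta_k$ yields the stated quotient.

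The main care is bookkeeping: the signs from the clockwise orientation in the evaluation lemmas, and the fact that the denominator $x_\ell^2-\beta_k$ is nonzero. The latter is automatic, because it equals a nonzero multiple of $Q'_{k,c}(x_\ell)$ and the zeros are simple.
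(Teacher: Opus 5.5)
Your proposal is correct and follows the paper's proof. Like the paper, you insert \eqref{Qdot-rep} and the ladder relation \eqref{Qprime-rep} with \eqref{Qprime-numerator} into \eqref{xdot-quotient} and use \eqref{hdot-rep}; you get the second form of $\beta_k$ from $(2k+1)h_k=\pi Q_{k,c}(1)Q_{k,c}(-1)+\frac{c\pi}{2}Q_{k,c}(0)^2$, which is the same identity the paper obtains by substituting $V_c'$ into $\mc{L}_c[yV_c'Q_{k,c}^2]$, and your extra remarks ($Q_{k-1,c}(x_\ell)\neq 0$, $h_k>0$ for $c\ge 0$, the reading of $\beta_k/c$ at $c=0$, oddness of $F$) correctly fill details the paper leaves implicit.
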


\begin{proof}
Inserting the formulas for $\dot{Q}_k$ from \eqref{Qdot-rep} and $Q_k'$ from \eqref{Qprime-rep} and \eqref{Qprime-numerator} into \eqref{xdot-quotient}, and using \eqref{hdot-rep} gives the ODE with $\beta_k$ in the first form. For the second form, from the proof of the preceding lemma
\[
(2k+1) h_k = \mc{L}_c[yV'Q_k^2]
\]
and replacing $V'$ gives
\begin{align*}
 \mc{L}_c[yV'Q_k^2]& =  -\mc{L}[yQ_k^2/(y^2-1)] - \frac{c}{2}\mc{L}[Q_k^2/y] \\
&= \pi Q_k(1) Q_k(-1) +\frac{\pi c}{2} Q_k(0)^2
\end{align*}

Finally, the ODE is unchanged when replacing $x_\ell$ by $-x_\ell$. 
\end{proof}

 Since $k$ is even, $Q_{k,c}(1) Q_{k,c}(-1)>0$ on $J_{k,0}$, hence $0<\beta_k<1$ for $c>0$ with $\beta_k(0)=0$. A direct calculation gives for $c>0$ and real $\zeta$
\[
\frac{d}{d\zeta} F(\zeta,c) = \frac{\beta_k(c)}{c}  \left( 1 + \frac{(1-\beta_k(c)) (\zeta^2 +\beta_k(c))}{(\zeta^2 - \beta_k(c))^2}\right)\ge \frac{\beta_k(c)}{c}>0
\]

Let $\zeta_1(c) <\zeta_{2}(c)$ with the property that no pole of $F$ lies between these two values. Then
\begin{align}\label{distance-flow}
\dot{\zeta}_2 - \dot{\zeta}_1 = F(\zeta_2) - F(\zeta_1) \ge 0
\end{align}
so the gap between $\zeta_1$ and $\zeta_2$ as a function of $c$ increases with $c$. 

\begin{lemma} For $k$ even and $c\in J_{k,0}$, the inequality $x_\ell(c)^2 > \beta_k(c)$ holds.
\end{lemma}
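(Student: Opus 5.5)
The plan is a continuity argument in $c$, starting from $c=0$ and working on the connected set $J_{k,0}$.

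\emph{Base case.} At $c=0$ we have $\beta_k(0)=0$. By Lemma \ref{lemma4}\ref{three}, $Q_{k,0}(0)\neq 0$, so every zero satisfies $x_\ell(0)\neq 0$. Hence $x_\ell(0)^2>0=\beta_k(0)$.

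\emph{Sign of $c$.} For $c<0$ in $J_{k,0}$ I will use the formula
\[
\beta_k(c)=\frac{cQ_{k,c}(0)^2}{2Q_{k,c}(1)Q_{k,c}(-1)+cQ_{k,c}(0)^2}.
\]
Since $k$ is even, $Q_{k,c}(1)Q_{k,c}(-1)>0$. The denominator equals $2(2k+1)h_k/\pi$. If it is positive, then $\beta_k\le 0$ and the inequality is immediate. If positivity of $h_k$ for $c<0$ is not available, I will restrict the statement to the range $c\ge 0$, which is where the ODE lemma is used. So the main case is $c\ge 0$, where $0\le\beta_k<1$.

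\emph{Reduction to ruling out a first touching.} Suppose the claim fails somewhere on $J_{k,0}\cap[0,\infty)$, and let $c_1>0$ be the first parameter at which $x_\ell(c_1)^2=\beta_k(c_1)$ for some $\ell$. This uses that the zeros are analytic in $c$ (Lemma \ref{Pade-basics}) and that $\beta_k$ is continuous. On $[0,c_1)$ all the quantities $x_\ell^2-\beta_k$ are positive. By the symmetry $\zeta\mapsto-\zeta$ we may take $\zeta=x_\ell>0$ approaching $\sqrt{\beta_k(c_1)}$ from above.

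\emph{Main obstacle: excluding the touching.} The key step is to show that the zero cannot reach the pole $\zeta^2=\beta_k$ of the vector field $F$. At this point $\dot{x}_\ell=F(x_\ell,c)$ must remain finite, because $x_\ell$ is analytic in $c$. However, the numerator satisfies
\[
\frac{\beta_k}{c}\,\zeta(\zeta^2-1)\neq 0 \quad\text{at }\zeta=\sqrt{\beta_k},
\]
since $0<\beta_k<1$ and $c>0$. So $F$ blows up as $\zeta\to\sqrt{\beta_k}$. This contradicts the boundedness of $\dot{x}_\ell$ near $c_1$, as long as $\beta_k(c_1)>0$. This in turn needs $Q_{k,c_1}(0)\neq 0$, which is Lemma \ref{lemma4}\ref{three}.

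To make the argument rigorous I will do the following:
\begin{enumerate}[label=(\roman*)]
\item Set $g(c)=x_\ell(c)^2-\beta_k(c)$. Then $g$ is real-analytic, with $g>0$ on $[0,c_1)$ and $g(c_1)=0$.
\item Use the ODE to write $\dot{x}_\ell\, g=(\beta_k/c)\,x_\ell(x_\ell^2-1)$.
\item As $c\to c_1^-$, the left side tends to $\dot{x}_\ell(c_1)\cdot 0=0$.
\item The right side tends to $(\beta_k/c_1)\sqrt{\beta_k}(\beta_k-1)\neq 0$. This is the contradiction.
\end{enumerate}

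A remaining subtlety is a zero sitting at $x_\ell=0$ while $\beta_k=0$. This cannot happen for $c>0$, since there $\beta_k>0$. It is also excluded by Lemma \ref{lemma4}\ref{three}. This gives $x_\ell(c)^2>\beta_k(c)$ throughout.
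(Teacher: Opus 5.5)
Your proposal is correct for $c\ge 0$ and follows the paper's own argument. The base case $x_\ell(0)^2>0=\beta_k(0)$ comes from $Q_{k,0}(0)\neq 0$, and a touching $x_\ell^2=\beta_k$ is excluded because, with $0<\beta_k<1$, the numerator of $F$ is nonzero there, so $\dot x_\ell$ would blow up, contradicting analyticity. Your cleared-denominator limit $\dot x_\ell\,(x_\ell^2-\beta_k)=(\beta_k/c)\,x_\ell(x_\ell^2-1)$ is a careful version of that step.

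Your restriction to $c\ge 0$ is actually necessary, and the paper's proof also assumes it tacitly by using $0<\beta_k<1$. Take $k=2$. There one finds $Q_{2,c}(x)=x^2+\frac{c+2}{4}x+\frac{(c+2)^3-32}{48(c+2)}$. At $c=-1.1\in J_{2,0}$ this gives $h_2<0$ and $\beta_2\approx 1.10>1>x_\ell^2$. So the inequality as literally stated can fail for negative $c\in J_{k,0}$.
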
 

\begin{proof} If $x_\ell^2(c) = \beta_k(c)$, then $\pm \dot{x}_\ell(c) = \infty$ from $0<\beta_k(c)<1$, contradicting analyticity on $J_{k,0}$. For $c=0$ the inequality $x_\ell(0)^2>\beta_k(0) =0$ holds since $Q_{k,0}(0)\neq 0$. 
\end{proof}

\begin{proposition}\label{zero-flow-prop} Let $k$ be even. Then $(-1)^{k/2} Q_{k,c}(0) >0$ for $c\in J_{k,0}\cap[0,\infty)$ and
\[
\lim_{c\to \sup J_{k,0}} Q_{k,c}(0)=0,\qquad \lim_{c\to \sup J_{k,0}} Q_{k,c}(\pm 1)\neq 0
\]
\end{proposition}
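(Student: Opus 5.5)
The plan has three steps: fix the sign of $Q_{k,c}(0)$ at $c=0$ and carry it along $J_{k,0}\cap[0,\infty)$, then use the zero flow $\dot\zeta=F(\zeta,c)$ to see how $J_{k,0}$ can end, then rule out every ending except a merger at the origin.

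\textbf{Sign of $Q_{k,c}(0)$.} By Lemma \ref{lemma4}\ref{three}, $Q_{k,c}(0)\neq 0$ on $J_{k,0}$, and $Q_{k,c}(0)$ is continuous in $c$ by Lemma \ref{Pade-basics}. Hence its sign is constant on the interval $J_{k,0}\cap[0,\infty)$, and it is enough to check $c=0$.

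At $c=0$ the weight is Jacobi-type and symmetric up to the factor $\sqrt{(x-1)/(x+1)}$. Combined with \eqref{PQ-relation} and interlacing, this should place exactly $k/2$ zeros of $Q_{k,0}$ in $(0,1)$. More robustly, I would use that $\nu_0$ has its zeros distributed so that $Q_{k,0}$, which is monic of even degree, has exactly $k/2$ positive zeros. That gives $\sgn Q_{k,0}(0)=(-1)^{k/2}$.

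\textbf{Zero flow on $[0,\sup J_{k,0})$.} Write $c^*=\sup J_{k,0}$, which is finite, and recall $0<\beta_k<1$ for $c>0$. The zeros of $Q_{k,c}$ come in the pattern fixed by interlacing. By the lemma $x_\ell^2>\beta_k$, none of them crosses the poles $\pm\sqrt{\beta_k}$ of $F$. So the zeros in $(\sqrt{\beta_k},1)$ stay there, and by the symmetry $\zeta\mapsto-\zeta$ of the ODE, likewise on the negative side.

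Next, $F(\zeta,c)<0$ for $\sqrt{\beta_k}<\zeta<1$, since $\zeta(\zeta^2-1)<0$ and $\zeta^2-\beta_k>0$. So every positive zero decreases in $c$. It stays bounded away from $1$ because $F$ vanishes at $\zeta=1$, so the zero cannot reach $1$ in finite $c$ by ODE uniqueness, given that $\zeta\equiv1$ is a solution. This gives $\lim_{c\to c^*}Q_{k,c}(\pm1)\neq0$, provided the limits of the coefficients exist. I would get existence from the monotonicity of the zeros together with the bound $\sum\lambda_{\ell,c}=\pi(1+c/2)$.

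The gap estimate \eqref{distance-flow} shows that adjacent zeros on the same side of a pole do not merge. The same holds for the $\gamma_{k,c}$-zeros via \eqref{PQ-relation}, since $\gamma_{k,c}=P_{k,c}$ and its zeros are the negatives of those of $Q_{k,c}$, so interlacing is preserved away from the origin.

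\textbf{Identifying the endpoint.} By Corollary \ref{Cor2} and Lemma \ref{Jk0=Ik0}, at $c^*$ interlacing must fail. The failure happens either by merging or by a zero reaching $\pm1$, and both of these are excluded away from $0$. The remaining possibility is the innermost positive zero $x_+(c)$ and its mirror $-x_+(c)$ (a zero of $\gamma_{k,c}$) converging to $0$ together with $\sqrt{\beta_k}$. In that case $Q_{k,c}(0)\to0$.

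\textbf{Main obstacle.} The hard step is making this last argument rigorous: showing that the smallest positive zero actually reaches $0$, rather than the limit degenerating in some other way, such as $h_{k-1}\to0$, $\lambda_{\ell,c}\to0$, or $\beta_k$ staying bounded below. My first attempt would be to bound $\dot x_+\le F(x_+,c)$ below. The ODE forces $x_+\to\sqrt{\beta_k}$ only if $\beta_k\to0$, but $\beta_k=c\dot h_k/((2k+1)h_k)$ is comparable to $Q_{k,c}(0)^2$. So if $Q_{k,c}(0)$ stays bounded away from $0$, the flow is regular up to $c^*$, the coefficients extend analytically past $c^*$, and $c^*\in\operatorname{int} J_{k,0}$, which is a contradiction. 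This forces $Q_{k,c}(0)\to0$.
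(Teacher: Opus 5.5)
Your proposal is correct and is essentially the paper's own argument. It uses the barrier $x_\ell^2>\beta_k$ and the gap monotonicity \eqref{distance-flow} for same-sign values $\pm x_\ell$ and against $\pm1$, which leaves a collision of $\pm x_+$ at the origin as the only way $J_{k,0}$ can end, and it carries the sign of $Q_{k,c}(0)$ over from $c=0$ by Lemma \ref{lemma4}\ref{three} and continuity. Two steps you left hedged close as follows: the sign at $c=0$ is settled by the route you mention, since the $2k$ points $\pm x_\ell$ strictly alternate between zeros of $Q_{k,c}$ and $\gamma_{k,c}$, are symmetric about $0$ and avoid $0$, so exactly $k/2$ of the $x_\ell$ are positive; and the degenerations you worry about at $c^*$ cannot occur, because strict interlacing of the limiting zeros forces $\lambda_{\ell,c^*}>0$, hence a nonsingular Hankel matrix and analytic continuation past $c^*$.
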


\begin{proof} Denoting by $x_+(c)$ the non-negative zero of $Q_{k,c}$ closest to the origin, we have $\pm x_+(c) \notin [-\sqrt{\beta_k(c)},\sqrt{\beta_k(c)}]$. For any values $\pm x_\ell(c)$ that have the same sign, the distance between them is non-decreasing on $J_{k,0}$ by \eqref{distance-flow}, and the same argument applied to positive $\pm x_\ell(c)$  and $1$ (negative $\pm x_\ell(c)$ and $-1$) shows that the zeros maintain a positive distance to $\pm 1$ at $c = \sup J_{k,0}$. 

Since $\sup J_{k,0}<\infty$, collision at $x=0$ of $\pm x_\ell$ is the only remaining possibility of violating the conditions defining $J_{k,0}$. This implies that $Q_{k,c}(0)$ has the same sign as $Q_{k,0}(0)$.
\end{proof}

So far, for $q_{k,c}$ from \eqref{Pade-definition-qkc} with even $k$, Lemma \ref{Jk0=Ik0}  and Corollary \ref{Cor2} give  $\sup J_{k,0} = \sup I_{k,0} = C_{2k-2}$, and Proposition \ref{zero-flow-prop} implies that $\sup J_{k,0}$ is  the first positive zero of $c\mapsto q_{k,c}(0)$. In the final section, we set up a Riemann-Hilbert problem to compute 
\[
\lim_{k\to \infty}  q_{k,d/k}(0)
\]
for positive $d$.

\section{Riemann-Hilbert Problem }\label{RHP-section}

 We reformulate the orthogonality as a Riemann-Hilbert problem.  For a general overview we refer to the lecture notes \cite{Dei99}.  The article \cite{KMV04} treats strong asymptotics of weights on $[-1,1]$. Weights with an interior essential singularity were investigated in \cite{ACM16, BMM15}. The corresponding problem of a singularity at an end point goes back to \cite{CIts10}. The present situation is closest to \cite{BMM15} with the added simplification that the model RHP for cosh-Gordon can be imported from \cite{FIKN06} rather than having to compute  the Lax pair.
 
 Let $Y$ be a $2\times2$ matrix whose entries are analytic on $\C\backslash\Gamma_R$, denote by $Y_\pm(z)$, $z\in \Gamma_R$, the non-tangential limit value from the $\pm$ side, where $+$ indicates left and $-$ indicates right of the contour direction, and assume
\begin{align}\label{PA-RHP-boundary}
Y_+(z) = Y_-(z) \begin{pmatrix} 1 & W_{c}(z) \\ 0 & 1 \end{pmatrix}
\end{align}
on $\Gamma_R$. (Recall that $\Gamma_R$ is clockwise oriented.) The normalization at infinity is given by
\begin{align}\label{PA-RHP-infinity}
Y(z) = (I + \mc{O}(z^{-1}))\begin{pmatrix} z^k & 0 \\ 0 & z^{-k}\end{pmatrix}
\end{align}

 It is known that the solution $Y$  of this Riemann-Hilbert problem exists and is uniquely determined. As usual, $\sigma_3$ and $A^{\sigma_3}$ for scalar $A$ are given by
\[ 
\sigma_3 = \begin{pmatrix} 1 & 0 \\ 0 & -1\end{pmatrix}, \qquad A^{\sigma_3} =  \begin{pmatrix} A & 0 \\ 0 & A^{-1} \end{pmatrix}
\]

In terms of the monic orthogonal polynomials $q_{k,c}(z)$, for $z\notin \Gamma_R$ the solution $Y$ has the representation
\[
Y(z) =\begin{pmatrix} q_{k,c}(z)&  \displaystyle  \frac{1}{2\pi i} \displaystyle  \int_{\Gamma_R} \frac{q_{k,c}(t) W_c(t)}{t-z} dt \\ -\frac{2\pi i}{h_{k-1}(c)} q_{k-1,c}(z) &  \displaystyle  -\frac{1}{ h_{k-1}(c)}  \displaystyle  \int_{\Gamma_R} \frac{q_{k-1,c}(t)W_c(t)}{t-z} dt\end{pmatrix}
\]

We set 
\[
c = \frac{d}{k}
\]
 with  $d$ independent of $k$.   

\subsection{Contour deformation.} Let $0<r<1/2$. We define $\Sigma = \Sigma_1\cup...\cup \Sigma_4$ with
\begin{align*}
\Sigma_1 = [-1,-r], \qquad \Sigma_2 = [r,1], \qquad \Sigma_3 = \{|z|=r\} \qquad \Sigma_4 =\{|z|=R\}
\end{align*}
where the line segments are oriented left to right, $\Sigma_3$ is oriented {\it counterclockwise}  and $\Sigma_4$ is oriented {\it clockwise}.

   Define $Z$ on $\C\backslash \Sigma$   by 
\[
Z(z) = \begin{cases}
Y(z)  & \text{ if }|z|<r\text{ or } |z|>R\\
Y(z)  \begin{pmatrix} 1 & -W_{c}(z) \\ 0 & 1 \end{pmatrix}  & \text{ if } r<|z|<R
\end{cases}
\]
and observe that $Z$ is continuous on $\Gamma_R$.  Define
\begin{align}\label{omegac-def}
\omega_c(z) = \sqrt{\frac{1-z}{1+z}}e^{-c/(2z)}  \textrm{ for }z\in \C\backslash\big((-\infty, -1]\cup [1,\infty)\cup\{0\}\big)
\end{align}
so that $(W_c)_+ - (W_c)_- = \omega_c$ on $(-1,1)\backslash\{0\}$. We define $\omega_c^{1/2}$ on the same domain.  The asymptotic of $Z$ at infinity is
\[
Z(z) = (I + \mc{O}(z^{-1})) z^{k\sigma_3}
\]
 
 Define $\phi$ for $z\in \C\backslash[-1,1]$ by
\[
\phi(z) = z + (z^2-1)^{1/2}
\]

Then $\phi(z) = 2z +\mc{O}(z^{-1})$ as $z\to\infty$, it maps $\C\backslash[-1,1]$ to the exterior of the unit disk, is positive for $z>1$, and $\phi_+(x) \phi_-(x) =1$ for $-1<x<1$. Denoting by $\arcsin(z)$ the branch analytic on $\C\backslash ((-\infty,-1]\cup [1,\infty))$ with $\arcsin(0)=0$, the  identities
\[
\phi(z) =  i^{\pm 1}e^{\mp i\arcsin(z)}\qquad \text{ for }\pm \Im z>0
\]
provide the analytic continuations of the boundary values  across  $[-1,1]$. Define $T$ by 
\[
T(z) = 2^{k\sigma_3} Z(z) \varphi(z)^{- k\sigma_3}    
\]
 where $\varphi =\phi$ for $|z|>r$ and branch cut deformed along the semi circle $|z|=r$ in $\Im z<0$. We note that even though $|\varphi_\pm|\neq 1$ on the deformed cut, the identity $\varphi_+\varphi_- =1$ remains true, and hence
\[
\varphi_-^{k\sigma_3} = \varphi_+^{-k\sigma_3}
\]

\subsection{Lens Opening} We define $S$ on $\C\backslash\Sigma$ by  
\[
S = \begin{cases}
T    &\text{ if }  |z|<r\text{ or }|z|>R\\
T\begin{pmatrix} 1 & 0 \\ -\frac{\varphi^{-2k}}{\omega_c} & 1 \end{pmatrix}   & \text{ if }  r<|z|<R\text{ and }\Im z>0, \\
T\begin{pmatrix} 1 & 0 \\ \frac{\varphi^{-2k}}{\omega_c} & 1\end{pmatrix}   & \text{ if }  r<|z|<R\text{ and }\Im z<0.
\end{cases}
\]
and we observe that $S$ extends continuously to $ (-R,-1)\cup (1,R)$ since $\omega_c$ has analytic continuation $-\omega_c$ across $(-R,-1)$ and $(1,R)$ while $\varphi$ is analytic across both segments. The underlying heuristic is a lens opening  
\[
\begin{pmatrix} \varphi_-^{2k} & \omega_c \\ 0 & \varphi_+^{2k} \end{pmatrix} = \begin{pmatrix} 1& 0 \\ \varphi_+^{2k}/\omega_c & 1 \end{pmatrix} \begin{pmatrix} 0 & \omega_c \\ -1/\omega_c & 0\end{pmatrix} \begin{pmatrix} 1 & 0 \\ \varphi_-^{2k}/\omega_c & 1 \end{pmatrix}
\]
on $[-1,-r]$ and on $[r,1]$. The lens contour above $[r,1]$ is then swept to the contour $[R,1] \cup \{R e^{i\theta}: 0\le \theta\le \pi/2\}\cup [ir, iR] \cup \{ re^{i\theta}: 0\le \theta\le \pi/2\}$ and analogously for the other three lens contours, with cancellation on $\pm[ir, iR]$. 

To compute the jumps of $S$, we record the identities
\begin{align*}
S = 2^{k\sigma_3} Y\begin{cases}
\varphi^{-k\sigma_3} &\text{ if }|z|<r,\\
\begin{pmatrix} 1 & W_c \\ 0 & 1 \end{pmatrix} \begin{pmatrix} 1 & 0 \\ -\frac{1}{\omega_c} & 1 \end{pmatrix}\varphi^{-k\sigma_3} &\text{ if }r<|z|<R, \Im z>0\\
\begin{pmatrix} 1 & W_c \\ 0 & 1 \end{pmatrix} \begin{pmatrix} 1 & 0 \\ \frac{1}{\omega_c} & 1 \end{pmatrix} \varphi^{-k\sigma_3} & \text{ if }r<|z|<R, \Im z<0
\end{cases}
\end{align*}

 We denote by $\Sigma_{3,\pm}$ the upper and lower half of $|z|=r$, respectively, and define $\Sigma_{4,\pm}$ analogously.  Since left multiplication by $Y$ affects the jumps only on $\Sigma_{4,\pm}$ where $Y$ is discontinuous,  $S $ satisfies
\begin{enumerate}
\item $S$ is analytic on $\C\backslash\Sigma$
\item $S(z) = I +\mc{O}(z^{-1})$ as $z\to \infty$
\item $S_+ = S_- J$ on $\Sigma$ where
\[
J= \begin{cases}
\varphi^{k\sigma_3} e^{-c/(4z)\sigma_3}  \begin{pmatrix} 1 & 0 \\ \mp \frac{1}{\omega_0}  & 1 \end{pmatrix}  e^{c/(4z)\sigma_3}\varphi^{-k\sigma_3} & \text{ on }\Sigma_{4,\pm}\\
\varphi^{k\sigma_3}  e^{-c/(4z)\sigma_3} \begin{pmatrix} 1 & -W_0 \\ \frac{1}{\omega_0} & \frac{1}{2} \end{pmatrix}e^{c/(4z)\sigma_3} \varphi^{-k\sigma_3} &\text{ on }\Sigma_{3,+}\\
\varphi_-^{k\sigma_3}  e^{-c/(4z)\sigma_3} \begin{pmatrix} 1 & -W_0 \\ -\frac{1}{\omega_0} & \frac{1}{2} \end{pmatrix} e^{c/(4z)\sigma_3}\varphi_+^{-k\sigma_3}& \text{ on }\Sigma_{3,-}\\
 e^{-c/(4z)\sigma_3} \begin{pmatrix} 0 & \omega_0 \\ -\frac{1}{\omega_0} & 0 \end{pmatrix}e^{c/(4z)\sigma_3} & \text{ on }\Sigma_1\cup \Sigma_2
\end{cases}
\]
\item For $z\notin \Sigma$, 
\begin{align*}
S(z)=\mc{O}\begin{pmatrix}|z-1|^{-1/2} & 1 \\ |z-1|^{-1/2} & 1\end{pmatrix}\ (z\to1),\\
S(z)=\mc{O}\begin{pmatrix}1 & |z+1|^{-1/2}\\  1 & |z+1|^{-1/2}\end{pmatrix}\ (z\to-1),
\end{align*}
\end{enumerate}

The growth conditions at $z = \pm1$ imply that $S$ is the  unique solution for the RHP defined by conditions (1)-(4).

\subsection{Global parametrix} Define an RHP using the jump of $S$ on $(-1,1)$, and the asymptotics as $z\to \infty$ and $z\to \pm1$.  From \cite{KMV04} we obtain the form of the solution $N$. Let $a$ be given by
\[
a(z) = \left(\frac{z-1}{z+1}\right)^{1/4}
\]
with branch cut $[-1,1]$ and positive values for $z>1$. The global parametrix  $N$ is
\[
N = \begin{pmatrix} \frac{a+a^{-1}}{2a} &  \frac{(a-a^{-1})a}{2i} \\ -\frac{a-a^{-1}}{2ia} &   \frac{(a+a^{-1})a}{2}\end{pmatrix}.
\]

On $(-1,1)$ the identities  $a_+ a_- = \omega_0$, $a_+/a_- = i$, $a_+^2 = i\omega_0$ and $a_-^2 = -i\omega_0$ are valid, and the boundary values at the origin are
\[
N_\pm(0)= \frac12 \begin{pmatrix} 1 \mp i & i \pm 1  \\ i \mp 1  & 1 \pm i \end{pmatrix}
\]

 \subsection{Model problem}  Let $\delta>0$ with $r<\delta<1/2$. In $|z|<\delta$, the jump $J$ of $S$ can be written as
\[
J= \begin{cases}
\varphi^{k\sigma_3}  e^{-c/(4z)\sigma_3} \omega_0^{\frac12 \sigma_3}  \begin{pmatrix} 1 & -\frac12 \\ 1& \frac{1}{2} \end{pmatrix} \omega_0^{-\frac12 \sigma_3}   e^{c/(4z)\sigma_3} \varphi^{-k\sigma_3} \qquad \text{ on }\Sigma_{3,+}\\
\varphi_-^{k\sigma_3}  e^{-c/(4z)\sigma_3}  \omega_0^{\frac12 \sigma_3}   \begin{pmatrix} 1 & \frac12  \\ -1 & \frac{1}{2} \end{pmatrix}  \omega_0^{-\frac12 \sigma_3}   e^{c/(4z)\sigma_3}\varphi_+^{-k\sigma_3}\qquad  \text{ on }\Sigma_{3,-}\\
 e^{-c/(4z)\sigma_3}  \omega_0^{\frac12 \sigma_3}  \begin{pmatrix} 0 & 1 \\ -1 & 0 \end{pmatrix} \omega_0^{\frac12 \sigma_3}  e^{c/(4z)\sigma_3} \qquad \text{ on }(\Sigma_1\cup \Sigma_2)\cap\{|z|<\delta\}
\end{cases}
\]

Define
\[
B= \begin{pmatrix} 1 & \frac12 \\ 0 & 1 \end{pmatrix}   \begin{pmatrix} 1 & 0 \\ - 1 & 1 \end{pmatrix} = \begin{pmatrix} \frac12 & \frac12 \\ -1 & 1 \end{pmatrix}
\]
and observe with 
\[
\sigma = \begin{pmatrix} 0 & 1 \\ -1 & 0 \end{pmatrix}
\]
that 
\[
\begin{pmatrix} 1 & -\frac12 \\ 0 & 1 \end{pmatrix} \begin{pmatrix} 1 & 0 \\ 1 & 1 \end{pmatrix} \sigma = B
\]

The insertion of $\sigma$ and its inverse  is due to the fact that  $\varphi^{\sigma_3}$ has a jump on its branch cut that interchanges its diagonal. Among all conjugations that undo that switch, the particular matrix used here is also the   jump of the global parametrix at the origin. We observe that with $p = -q = i$ and 
\begin{align}\label{connectionQ}
Q = \frac{1}{\sqrt{1+pq}} \begin{pmatrix} 1 & p \\ -q & 1 \end{pmatrix}
\end{align}
the identity
\[
B =   e^{\frac{\pi i}{4}  \sigma_3} \, \sigma^{-1} 2^{\frac12 \sigma_3} Q  \sigma  \, e^{-\frac{\pi i}{4}  \sigma_3 }
\]
holds. Define $D$ by
\[
D =  \varphi^{-k\sigma_3}  \omega_0^{-\frac12 \sigma_3} e^{c/(4z) \sigma_3}
\]

With this notation, $J$ has the representation
\begin{align}\label{simple-J}
\begin{split}
J = \begin{cases}
 D^{-1} B^{-1} D  &\text{ on }\Sigma_{3,+}\\
 D_-^{-1} \sigma B^{-1} D_+ &\text{ on }\Sigma_{3,-}\\
D_-^{-1} \sigma D_+&\text{ on }\Sigma_1\cup \Sigma_2
\end{cases}
\end{split}
\end{align}
where we used $\sigma =  \varphi_-^{k\sigma_3} \sigma \varphi_+^{-k\sigma_3}$ for the third expression. This allows reading off the model problem from \cite[Chapter 13]{FIKN06}. 

\subsection{Lax pair} Let $\lambda,x  \in \C$ and consider the Lax pair $\Psi_\lambda = A\Psi$, $\Psi_x = U\Psi$ where
\begin{align*}
A &= -\frac{ix^2}{16} \sigma_3 - \frac{ixu_x}{4} \sigma_1 \lambda^{-1} +i(\cos u \sigma_3 - \sin u \sigma_2)\lambda^{-2}\\
U &= -\frac{iu_x}{2} \sigma_1 - \frac{ix}{8} \sigma_3 \lambda
\end{align*}
where $u = u(x)$ satisfies
\[
u_{xx} + \frac{u_x}{x}  +\sin(u) =0
\]

  We let $\Psi_\kappa^{(0)}, \Psi_\kappa^{(\infty)}$ be the fundamental solutions from \cite[(13.1.11)]{FIKN06} 
\begin{align*}
\Psi_\kappa^{(\infty)} &\sim (I + \psi_1^{(\infty)} \lambda^{-1} +\hdots)e^{-\frac{ix^2\lambda }{16}\sigma_3 }\qquad \lambda\to\infty \text{ in }\Omega_\kappa^{(\infty)}\\
\Psi_\kappa^{(0)} &\sim P_0(I +\hdots) e^{-\frac{i}{\lambda} \sigma_3}\qquad \lambda\to0 \text{ in }\Omega_\kappa^{(0)}
\end{align*}
where $\Omega_\kappa^{(0,\infty)}$ are slitted complex planes, and
\[
P_0=\begin{pmatrix} \cos(u/2) & -i \sin(u/2) \\ -i \sin(u/2) & \cos(u/2) \end{pmatrix}
\]

\noindent{\it Remark.} Setting $v = \frac1i(u-\frac\pi2)$ and $\theta = e^{i\pi/4} x$, we have $P_0 = Me^{-i\pi /4 \sigma_3} w^{-\sigma_3} M^{-1}$ with
\begin{align}\label{intro-wtheta}
w = e^{-v(\theta)/2}, \qquad M = \begin{pmatrix} 1 & 1 \\ 1 & -1 \end{pmatrix}
\end{align}

We show below that the critical $\theta$ satisfies $w(\theta)=0$. Since the second column contains $w^{-1}$, the RHP has a singularity for that value, and the small norm estimate needs error bounds that are uniform in $k$ and $|w|$.

\bigskip

Returning to $\Psi$, we fix its remaining parameters by requiring that
\[
\Psi_1^{(\infty)} = \Psi_1^{(0)} Q
\]
with $Q$ given by \eqref{connectionQ}. We observe from \cite[(13.1.31)]{FIKN06} that the Stokes matrices for this connection matrix are the identity, i.e., the $\Psi_\kappa^{(0)}$ are restrictions of a single function analytic in $\C\backslash\{0\}$, and analogously for $\Psi_\kappa^{(\infty)}$ which we will also denote by $\Psi_1^{(0)}$ and $\Psi_1^{(\infty)}$, respectively.

We may assume that $k$ is large with $0<d/k<1/2$. Let $\Lambda_+$ be the component of $\C\backslash\{ \lambda: |\sinh(\frac{d}{2k} \lambda)| = r\}$ containing the origin. Then $\overline{\Lambda_+}$ is a convex body, and we set $\Lambda_-=\C\backslash \overline{\Lambda_+}$.  We define $\Psi $  by
\begin{align}\label{PsiDef}
\Psi = \begin{cases}
\Psi_1^{(\infty)}& \text{ on }\Lambda_- \\
 \Psi_1^{(0)}2^{-\frac12 \sigma_3}& \text{ on }\Lambda_+\backslash\{0\}
\end{cases}
\end{align}
and we observe that on $\partial \Lambda_+$  oriented counterclockwise
\[
\Psi_-^{-1} \Psi_+ =   \big(\Psi_1^{(\infty)}\big)^{-1} \Psi_1^{(0)}2^{-\frac12 \sigma_3} =  Q^{-1}  2^{-\frac12 \sigma_3}
\]

\subsection{Local parametrix at the origin} Let $\Omega_\pm$ be the set of $z$ to the left and right, respectively, of the path from $-\delta$ to $\delta$ along $\Sigma_1\cup \Sigma_{3,-}\cup \Sigma_2$.  We set $x = 2\sqrt{d} e^{-\pi i /4} $, $\lambda = - 16\xi/x^2$ and $\Phi(\xi) = \Psi(\lambda,x)$ with $\Psi$ from \eqref{PsiDef}.   Define for $|z|<\delta$ the function  $P$  by
\begin{align}
P(z) = 
\begin{cases}
E(z)   \Phi(k \arcsin(z)) \, \sigma \, e^{-\frac{\pi i}{4}\sigma_3} D(z) &\text{ on }\Omega_+\\
E(z) \Phi(k\arcsin(z)) \,\,e^{\frac{\pi i}{4}\sigma_3}  D(z)&\text{ on }\Omega_-
\end{cases}
\end{align}
where $E$ is given by
\[
E = \begin{cases}
N \omega_0^{\frac12\sigma_3}\sigma^{-1}e^{-\frac{\pi i}{4}\sigma_3} (-I)^{k/2} &\text{ if }\Im z>0,\\
N\omega_0^{\frac12 \sigma_3}e^{-\frac{\pi i}{4}\sigma_3}(-I)^{k/2}&\text{ if }\Im z<0
\end{cases}
\]

Since $N_-^{-1} N_+ = \omega_0^{\frac12 \sigma_3} \sigma \omega_0^{-\frac12 \sigma_3}$ on $[-1,1]$, we obtain that $E$ is analytic on $|z|<\delta$. 

\begin{enumerate}
\item {\bf Jumps.} We  verify with \eqref{simple-J} that $P$ and $S$ have the same jumps in $|z|<\delta$:
\begin{enumerate}
\item On  $\Sigma_{3,+}$
\[
P_-^{-1} P_+ = D^{-1} e^{\frac{\pi i}{4}\sigma_3}  \sigma^{-1} Q^{-1} 2^{-\frac12 \sigma_3} \sigma e^{-\frac{\pi i}{4}\sigma_3}  D = D^{-1} B^{-1} D
\]
\item On $\Sigma_{3,-}$
\[
P_-^{-1} P_+ = D_-^{-1} \left( \sigma e^{\frac{\pi i}{4} \sigma_3} \sigma^{-1} \right) Q^{-1} 2^{-\frac12 \sigma_3} \sigma  e^{-\frac{\pi i}{4}\sigma_3} D_+ = D_-^{-1} \sigma B^{-1}D_+
\]

\item On $\Sigma_1\cup\Sigma_2$
\[
P_-^{-1} P_+ = D^{-1}_- e^{-\frac{\pi i}{4} \sigma_3} \sigma e^{-\frac{\pi i}{4} \sigma_3} D_+ = D_-^{-1} \sigma D_+
\]
\end{enumerate}
 
\item {\bf Asymptotic on $|z|=\delta$ as $k\to\infty$.} We observe
\[
\sigma e^{-\frac{\pi i}{4}\sigma_3} D= e^{\frac{\pi i}{4}\sigma_3} D^{-1}\sigma
\]
and recalling that $k$ is even, for $\pm \Im z>0$ and $|z| =\delta$, 
\[
e^{i k \arcsin(z) \sigma_3} \varphi(z)^{\pm k\sigma_3} =(-I)^{k/2}     
\]

We have for $\Im z>0$, $|z|=\delta$ and $c = d/k$
\begin{align*}
P&\sim E \left(I -\frac{x^2}{16kz} \psi_1^{(\infty)} +\hdots\right) e^{i k \arcsin(z) \sigma_3}  \sigma e^{-\frac{\pi i}{4}\sigma_3} D(z) \\
&= E\left(I -\frac{x^2}{16kz} \psi_1^{(\infty)} +\hdots\right) e^{i k \arcsin(z) \sigma_3} \varphi^{k\sigma_3}  e^{\frac{\pi i}{4}\sigma_3} \omega_0^{\frac12 \sigma_3} e^{-\frac{c}{4z} \sigma_3} \sigma
\end{align*}

For fixed $d$, it would be sufficient to use $I + \mc{O}(1/k)$ for the term in brackets, but in the final section, we need an estimate for a range of $d$. We bound the coefficient of $z^{-1}$. Referring to the variables in \eqref{intro-wtheta}, 
\[
u_x = i v_\theta \frac{d\theta}{dx} 
\]
and $v_\theta = -2w_\theta/w$.  We use  $|w|$ of size $k^{-\frac16}$, and the powers  $z^{-j}$ have errors of size $1/|kw|^j$. We get
\begin{align*}
P &= E\left(I + \mc{O}\left(\frac{1}{k|w|}\right)\right) (-I)^{k/2}  e^{\frac{\pi i}{4}\sigma_3}\omega_0^{\frac12 \sigma_3} \sigma
\end{align*}
where the $\mc{O}$-constants do not depend on $w$, and for $\Im z<0$, $|z|=\delta$ 
\begin{align*}
P&\sim E \left(I -\frac{x^2}{16kz} \psi_1^{(\infty)} +\hdots\right) e^{i k \arcsin(z) \sigma_3} \varphi^{-k\sigma_3}  e^{\frac{\pi i}{4}\sigma_3} \omega_0^{-\frac12 \sigma_3} e^{\frac{c}{4z} \sigma_3} \\
&= E (I + \mc{O}(1/(k|w|)))  (-I)^{k/2}e^{\frac{\pi i}{4}\sigma_3}\omega_0^{-\frac12 \sigma_3} 
\end{align*}
also with  $\mc{O}$-constants independent of $w$.

\item {\bf Origin analysis.} For $|z|<r$, 
\begin{align*}
P& \sim E P_0 (I + \mc{O}(1/|kw|)) e^{\frac{d}{4\xi}\sigma_3} e^{\frac{\pi i}{4}\sigma_3} D^{-1}\sigma
\end{align*}
hence
\begin{align*}
P&= E P_0(I + \mc{O}(z)) e^{\frac{d}{4k} (1/\arcsin(z) - 1/z)\sigma_3}  e^{\frac{\pi i}{4}\sigma_3} \omega_0^{-\frac12 \sigma_3} \sigma 
\end{align*}
and this remains bounded as $|z|\to 0$.

\end{enumerate}

\subsection{Local parametrices at $z=\pm 1$} Local parametrices $P_{\pm 1}$ in $|z \mp 1|<\delta$ may be constructed as in \cite{KMV04}, with the modification that pre and post factor of the parametrices carry an additional $e^{\pm d/(4kz)\sigma_3}$ to ensure that $S$ and $P_{\pm 1}$ have the same jumps in the  disks.

\subsection{Small norm estimate} Define $\mc{R}$  by 
\[
\mc{R} = \begin{cases}
SN^{-1} & \text{ for }|z|\ge \delta\text{ with }z\notin [-1,1]\cup \Gamma_R\cup \{z:|z\mp 1|<\delta\},\\
S P^{-1}&\text{ for }|z|<\delta\\
S P_{\pm1}^{-1}&\text{ for }|z\mp 1|<\delta
\end{cases}
\]

By construction, $\mc{R}$ has no jumps in $|z|<\delta$ and an isolated singularity at the origin. Since $S$ and $P$ separately remain bounded near the origin, the singularity is removable, and $\mc{R}$ is analytic in $|z|<\delta$. The analogous statement for $|z\pm 1|<\delta$ follows as in \cite{KMV04}. 

The jump contour of $\mc{R}$ consists of the circles $|z|=\delta$, $|z\mp 1|=\delta$, $|z|=R$, and the line segments $\pm[\delta, 1-\delta]$.

On $|z|=R$ the jump of $S$ is $I+\mc{O}(e^{-\mu k})$ for positive $\mu$  while $N$ has no jump. On $|z|=\delta$ oriented counterclockwise and $\Im z>0$,
\begin{align*}
\mc{R}_-^{-1} \mc{R}_+&= N^{-1} P\\
&= N^{-1} E(z)   \Phi(k \arcsin(z)) \, \sigma \, e^{-\frac{\pi i}{4}\sigma_3} D(z) \\
&= N^{-1}E(I + \mc{O}(1/k|w|))   e^{\frac{\pi i}{4}\sigma_3}\omega_0^{\frac12 \sigma_3} \sigma\\
&= I+\mc{O}(1/(k|w|))
\end{align*}

On $|z|=\delta$  and $\Im z<0$,
\begin{align*}
\mc{R}_-^{-1} \mc{R}_+&= N^{-1} P\\
&= N^{-1}E(z) \Phi(k\arcsin(z)) \,\,e^{\frac{\pi i}{4}\sigma_3}  D(z)\\
&= N^{-1}E (I + \mc{O}(1/k))  e^{\frac{\pi i}{4}\sigma_3}\omega_0^{-\frac12 \sigma_3} \\
&= I +\mc{O}(1/(k|w|))
\end{align*}

On $|z\mp 1|=\delta$ the analogous estimates for $\mc{R}$ follow from \cite{KMV04}, while the estimate on $|z|=R$ follows since the lens factors decay exponentially. The estimate $|\omega_{d/k}(z) - \omega_0(z)| = \mc{O}(1/k)$ on real $\delta\le |z|\le 1$ is used to bound the jumps on the line segments. 

It follows from the small norm theorem \cite{Dei99}  that with the variables from \eqref{intro-wtheta}
\[
\mc{R}(z,\theta) = I +\mc{O}(1/|kw(\theta)|)
 \]
with constants independent of $\theta$. 

\section{Asymptotics and zero locations}

We consider $Y_{11}(\sin(\xi/k))$ where $\xi = k \arcsin(z)$, hence we may assume $|z|<r$.  We have $Y_{11} = Z_{11}$ and 
\begin{align*}
Z &= 2^{-k\sigma_3} T \varphi^{k\sigma_3}\\
&=  2^{-k\sigma_3} (I + \mc{O}(1/(k|w|)))E(z)   \Phi(k \arcsin(z)) \, \sigma \, e^{-\frac{\pi i}{4}\sigma_3} D(z) \varphi^{k\sigma_3}
\end{align*}

Setting $z = \sin(\xi/k)$, we define $F$ by
\[
F(\xi,d) = \lim_{k\to\infty} 2^{k\sigma_3} Z(\sin(\xi/k)) 
\]
and we obtain
\begin{align*}
F(\xi,d) &= E(0)\Phi(\xi)  e^{\frac{\pi i}{4}\sigma_3} e^{-\frac{d}{4\xi}\sigma_3} \sigma\\
&=N_-(0)e^{ -\frac{\pi i}{4}\sigma_3}\Psi_1^{(0)}(-4i\xi/d, 2\sqrt{d} e^{-\pi i /4}  ) 2^{-\frac12 \sigma_3}  e^{\frac{\pi i}{4}\sigma_3}   e^{-\frac{d}{4\xi}\sigma_3} \sigma
\end{align*}

Since $\Psi_1^{(0)}(-4i\xi/d, 2\sqrt{d}e^{-\pi i /4} ) e^{-d/(4\xi)\sigma_3}$ has a removable singularity at the origin, $F$ extends to an entire function in $\xi$, and the asymptotic series at the origin gives its power series. Moreover, the connection formula shows that $F$ is an entire function of exponential type $1$ which is bounded on the real $\xi$-axis.
 
We have
\[
F_{11}(0,d) = \frac{1}{\sqrt{2}}\big( (1+i) \sin(u/2) +(1-i) \cos(u/2)\big) = \frac{1}{\sqrt{2}}(1-i) e^{i u/2}
\]
 
Since $\arg x = -\pi/4$, we set $\theta = e^{\pi i/4} x$, and we compute that $v$ defined by
\[
v = \frac1i \left(u -\frac\pi2\right)
\]
satisfies the cosh-Gordon equation
\begin{align}\label{cosh-Gordon-proof}
v_{\theta\theta} +\frac{1}{\theta} v_\theta = \cosh(v)
\end{align}

In terms of $v$,
\begin{align}\label{Fd0-rep}
F_{11}(0,d) = e^{-v(\theta)/2}
\end{align}
with $\theta = 2\sqrt{d}$.  Since $k$ is even, $g_k$ defined by
\[
g_k(\theta) = (-1)^{k/2} 2^k q_{k,\theta^2/(4k)}(0)
\]
is real valued. 
 
\begin{lemma} $w = e^{-v(\theta)/2} $ is strictly decreasing on $(0,\theta_*)$ and extends analytically across $\theta_*$ with $w(\theta_*) = -\frac12$
\end{lemma}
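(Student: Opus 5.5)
Throughout I will read the last assertion as being about the derivative, $w(\theta_*)=0$ and $w'(\theta_*)=-\tfrac12$. This is what the expansion $v=\log\frac{4}{(\theta-\theta_*)^2}+\dots$ quoted in the introduction gives. Note that $w=-\tfrac12$ is impossible for $w=e^{-v/2}$ approached from the left, so the stated value must be a misprint. The plan has three steps: monotonicity from the divergence form of the equation, blow-up of $v$ at $\theta_*$, and a local analysis at $\theta_*$ through the Painlev\'e III form of \eqref{cosh-Gordon-proof}.

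\emph{Monotonicity.} Write \eqref{cosh-Gordon-proof} as $(\theta v_\theta)_\theta=\theta\cosh v$. Since $v$ is regular at $0$, we have $\theta v_\theta\to 0$ as $\theta\to 0$. Integrating then gives
\[
\theta v_\theta(\theta)=\int_0^\theta s\cosh v(s)\,ds>0\qquad (0<\theta<\theta_*),
\]
so $v$ is strictly increasing on $(0,\theta_*)$. Hence $w=e^{-v/2}$ is strictly decreasing there, and $w>0$.

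\emph{Blow-up at $\theta_*$.} Suppose $v$ stayed bounded on $[\theta_*/2,\theta_*)$. Then the integral identity above bounds $v_\theta$ there. The ODE is regular for $\theta>0$, so the solution would continue past $\theta_*$, contradicting the definition of $\theta_*$. Since $v$ is increasing, it follows that $v\to+\infty$ and $w\to 0^+$ as $\theta\uparrow\theta_*$.

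\emph{Local analysis at $\theta_*$.} Put $y=e^{v}$. Then \eqref{cosh-Gordon-proof} becomes the Painlev\'e III equation
\[
y''=\frac{y'^2}{y}-\frac{y'}{\theta}+\frac{y^2+1}{2}.
\]
By the Painlev\'e property, every solution is meromorphic in $\theta\neq 0$. Because $y\to\infty$ as $\theta\to\theta_*$, the point $\theta_*$ is a pole of $y$. Its order and leading coefficient come from dominant balance: inserting $y\sim a(\theta-\theta_*)^{-m}$, the terms $y''-y'^2/y$ contribute $a m\,(\theta-\theta_*)^{-m-2}$, and this must match $\tfrac{a^2}{2}(\theta-\theta_*)^{-2m}$. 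This forces $m=2$ and $2a=a^2/2$, i.e.\ $a=4$. Consequently
\[
y=\frac{4}{(\theta-\theta_*)^2}\,h(\theta)
\]
with $h$ analytic near $\theta_*$ and $h(\theta_*)=1$. Then $w=y^{-1/2}=\pm\tfrac12(\theta-\theta_*)\,h^{-1/2}$, where $h^{-1/2}$ is analytic near $\theta_*$. This is an analytic function across $\theta_*$. Positivity of $w$ on $(0,\theta_*)$ selects the minus sign, which gives $w(\theta_*)=0$ and $w'(\theta_*)=-\tfrac12$. In particular $w$ changes sign at $\theta_*$, which is what the remainder of the argument needs for $g_k$.

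\emph{Independent check.} The same conclusion can be reached directly from the equation for $w$,
\[
w\,w''-w'^2+\frac{w\,w'}{\theta}=-\frac{1+w^4}{4}.
\]
At a zero of $w$ this forces $w'^2=\tfrac14$. Analytic continuation is then obtained by showing that the singular point of this ODE admits a unique analytic solution with $w(\theta_*)=0$, $w'(\theta_*)=-\tfrac12$ (a Briot--Bouquet type argument).

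\emph{Main obstacle.} The delicate step is to exclude other pole orders, or more complicated singularities, at $\theta_*$. I would handle this by quoting the Painlev\'e property, which guarantees meromorphy of $y$. Given meromorphy, only the order-$2$ balance above is consistent, and the Laurent coefficients are then determined recursively.
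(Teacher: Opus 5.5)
Your proof is correct. Reading the last assertion as $w(\theta_*)=0$, $w'(\theta_*)=-\tfrac12$ is the intended one: the paper's next sentence says $w$ has a simple zero at $\theta_*$ going from positive to negative values.

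\textbf{Comparison with the paper.} Your monotonicity step is the paper's. The paper integrates $(\theta v_\theta)_\theta=\theta\cosh v\ge\theta$ to get $v_\theta\ge\theta/2$, and your $v_\theta>0$ suffices. For the continuation the paper only writes down
\[
ww''-(w')^2+\frac{ww'}{\theta}+\frac14+\frac{w^4}{4}=0
\]
and says this gives both statements. That is the route of your ``independent check''. You instead apply the Painlev\'e property to $y=e^v$ and fix the pole by dominant balance.

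Working with $w$ directly is shorter. Dividing by $w$ shows the $w$-equation is itself standard PIII with $\alpha=\beta=0$, $\gamma=\delta=-\tfrac14$. So meromorphy together with $w\to0^+$ gives analyticity at $\theta_*$ at once, and setting $w=0$ gives $w'(\theta_*)^2=\tfrac14$ with no balance computation.

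Your route has its own gains. It reproduces the leading term $\log\frac{4}{(\theta-\theta_*)^2}$ of the expansion quoted in the introduction. You also prove explicitly that $v\to+\infty$ at $\theta_*$, which both routes need and the paper leaves implicit.

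One small point: your $y$-equation is not in standard PIII form in the variable $\theta$. Under $t=\theta^2/8$ it becomes PIII with $\alpha=\beta=1$, $\gamma=\delta=0$. Alternatively, $1/w=e^{v/2}$ satisfies PIII with $\alpha=\beta=0$, $\gamma=\delta=\tfrac14$ in $\theta$ itself. You should say which of these you quote the Painlev\'e property for.

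\textbf{The independent check.} The uniqueness claim in it is false. Write $w=\sum_{n\ge1}a_n(\theta-\theta_*)^n$ with $a_1=-\tfrac12$. In the equation at order $(\theta-\theta_*)^{n-1}$, the coefficient of $a_n$ is $a_1n(n-3)$.
\begin{itemize}
\item At $n=2$ this forces $a_2=-1/(4\theta_*)$.
\item At $n=3$ there is a resonance: the compatibility condition holds identically and $a_3$ is free.
\end{itemize}
This free coefficient is exactly the $c_2$ in the local expansion of $v$ at $\theta_*$ from the introduction, since $c_2=4a_3+1/(4\theta_*^2)$. So there is a one-parameter family of analytic solutions with $w(\theta_*)=0$, $w'(\theta_*)=-\tfrac12$.

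Consequently, a Briot--Bouquet uniqueness argument cannot by itself place your particular solution in that family. The check as formulated does not establish the continuation. The Painlev\'e property is what carries your proof, and implicitly the paper's.
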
 

\begin{proof} Writing the differential equation for $v$ as $(\theta v_\theta)_\theta = \theta \cosh v$, noting that this is $\ge \theta$ and integrating shows that $v_\theta\ge \theta/2$, which gives monotonicity. 
 A direct calculation shows that $w$ satisfies the equation 
\[
ww'' - (w')^2 + \frac{ww'}{\theta} +\frac14 + \frac{w^4}{4} =0
\]
which gives the two statements.
\end{proof}

Hence $w$ has a simple zero at $\theta_*$ from positive to negative values.  Going back to the representation of $Z_{11}$, the error bound picks up an additional factor $1/|w|$ from the second column of $P_0$ (cf.\ \eqref{intro-wtheta}). Hence
\begin{align}\label{gk-uniform-error}
g_k(\theta) = w(\theta)+ \mc{O}\left(k^{-1}|w(\theta)|^{-2}\right)
\end{align}

 Set $s_k = k^{-1/6}$ and consider $g_k(\theta_* \pm s_k)$. At these points, $|w|$ can be bounded below by $c_1 s_k$ with $c_1>0$, hence the error in \eqref{gk-uniform-error} is $\mc{O}(k^{-2/3})$ and it follows that $\pm g_k(\theta_*\pm s_k )>0$. Hence $g_k$ has a zero in $[\theta_*-s_k,\theta_*+s_k]$, and monotonicity of $w$ may be used to show $g_k(\theta)>0$ for $d<\theta_*-s_k$.  We obtain
\[
\lim_{k\to \infty} k C_{2k-2} = \frac{\theta_*^2}{4}.
\]

We consider finally the asymptotic at $d=0$. For $c=0$, we have $q_{k,0}(0) = 2^{-k} H_k(0)$ where $H_k$ is the Chebyshev polynomial of the fourth kind. Hence $g_k(0)=1$. The small norm estimate holds uniformly for $0\le d\le d(\theta_*)/2$, hence $g_k(\theta)\to F_{11}(0,d)$ with uniform error bounds. Hence the limit in $d$ and $k$ can be interchanged and $\lim_{d\to 0+} F_{11}(0,d) = 1$.   It follows that $v$ is the solution of cosh-Gordon that is regular at the origin with $v(0) =0$ and $v'(0)=0$.

\bibliographystyle{amsplain}
\bibliography{references-new}

\end{document}